
\documentclass[12pt,a4paper,twoside]{article}

\title{\sc Hodge-Helmholtz Decompositions of Weighted Sobolev Spaces
in Irregular Exterior Domains with Inhomogeneous and Anisotropic Media}
\def\shorttitle{Hodge-Helmholtz Decompositions}
\def\pauthor{Dirk Pauly}

\def\mylabelonoff{off}
\def\allowdisbrk{no}

\usepackage{a4,exscale,ifthen,amsfonts,amssymb,amsmath,amscd,graphicx}
\usepackage[english]{babel}
\usepackage[mathscr]{eucal}

\setlength{\textwidth}{16cm}
\setlength{\textheight}{22cm}
\setlength{\oddsidemargin}{-0.1cm}
\setlength{\evensidemargin}{-0.1cm}

\author{{\sf\pauthor}}
\pagestyle{myheadings}
\markboth{\pauthor}{\shorttitle}

\setcounter{footnote}{1}
\numberwithin{equation}{section}

\newenvironment{acknow}{{\vspace*{1cm}\noindent\bf Acknowledgements }}{}
\newcommand{\bewboxw}{\mbox{}\hfill $\square$ \\}

\newenvironment{proof}{{\noindent\bf Proof }}{\bewboxw}

\newcommand{\keywords}[1]{{\noindent\bf Key Words }#1}
\newcommand{\amsclass}[1]{{\noindent\bf AMS MSC-Classifications }#1}

\ifthenelse{\equal{\mylabelonoff}{on}}
{\newcommand{\mylabel}[1]{\label{#1}\fbox{{\rm #1}}}}{\newcommand{\mylabel}[1]{\label{#1}\makebox[0mm][]{}}}
\ifthenelse{\equal{\allowdisbrk}{yes}}
{\allowdisplaybreaks}{}

\newcommand{\paper}[7]{\bibitem{#1} #2, `#3', {\it #4}, #5, (#6), #7.}

\newcommand{\book}[6]{\bibitem{#1} #2, {\it #3}, #4, #5, (#6).}

\newcommand{\dissavail}[6]{\bibitem{#1} #2, `#3', {\sf Dissertation}, #4, (#5), available from {\tt #6}.}

\newcommand{\diplavail}[6]{\bibitem{#1} #2, `#3', {\sf Diplomarbeit}, #4, (#5), available from {\tt #6}.}


\usepackage{amsfonts,amssymb,amsmath,amscd}
\usepackage[mathscr]{eucal}



\newcommand{\schluss}{\ifodd\value{page}\newpage\thispagestyle{empty}\makebox[0mm][]{}\color{sehrhell}.\fi

\newcommand{\paulydissregtheoaussenzwei}{\cite[Satz 3.7]{paulydiss}}
\newcommand{\paulydissregtheoaussenlokal}{\cite[Korollar 3.8]{paulydiss}}
\newcommand{\paulydisstopiso}{\cite[Satz 6.10]{paulydiss}}
\newcommand{\paulydissEsmdef}{\cite[Lemma 7.11]{paulydiss}}

\newcommand{\paulytimeharmetadef}{\cite[(2.1), (2.2), (2.3)]{paulytimeharm}}
\newcommand{\paulytimeharmmlcpdef}{\cite[Definition 2.4, Remark 2.5]{paulytimeharm}}
\newcommand{\paulytimeharmpartint}{\cite[(2.5)]{paulytimeharm}}
\newcommand{\paulytimeharmdefeps}{\cite[Definition 2.1 and 2.2]{paulytimeharm}}
\newcommand{\paulytimeharmlzzerl}{\cite[(2.7)]{paulytimeharm}}

\newcommand{\paulystaticintdirichlet}{\cite[Lemma 3.8]{paulystatic}}
\newcommand{\paulystaticsecgenstatic}{\cite[section 4]{paulystatic}}
\newcommand{\paulystaticdomromequi}{\cite[Corollary 4.4]{paulystatic}}
\newcommand{\paulystaticsectower}{\cite[section 2]{paulystatic}}
\newcommand{\paulystaticsecstaticoperators}{\cite[section 3]{paulystatic}}
\newcommand{\paulystaticsphdeco}{\cite[Theorem 2.6]{paulystatic}}
\newcommand{\paulystaticremint}{\cite[Remark 2.5]{paulystatic}}
\newcommand{\paulystaticrotdivtheospez}{\cite[Theorem 5.1]{paulystatic}}
\newcommand{\paulystaticLloes}{\cite[Theorem 5.8]{paulystatic}}
\newcommand{\paulystaticoddtower}{\cite[Remark 2.4]{paulystatic}}
\newcommand{\paulystaticremsph}{\cite[Remark 2.3]{paulystatic}}
\newcommand{\paulystaticrotdivcorlem}{\cite[Corollary 3.13, Lemma 3.14]{paulystatic}}
\newcommand{\paulystaticproofdiv}{\cite[Lemma 3.5, Lemma 3.12]{paulystatic}}

\begin{document}

\date{2006}
\maketitle{}

\begin{abstract}
We study in detail Hodge-Helmholtz decompositions in nonsmooth exterior domains $\om\subset\rN$
filled with inhomogeneous and anisotropic media. We show decompositions of alternating differential forms
of rank $q$ belonging to the weighted $\lz$-space $\Lzqsom$\,, $s\in\rz$\,, into irrotational and solenoidal
$q$-forms. These decompositions are essential tools, for example, in electro-magnetic theory for exterior domains.
To the best of our knowledge these decompositions in exterior domains with nonsmooth boundaries
and inhomogeneous and anisotropic media are fully new results.
In the appendix we translate our results to the classical framework of vector analysis $N=3$ and $q=1,2$\,.\\
\keywords{Hodge-Helmholtz decompositions, Maxwell's equations, electro-magnetic theory,
weighted Sobolev spaces}\\
\amsclass{35Q60, 58A10, 58A14, 78A25, 78A30}
\end{abstract}

\tableofcontents

\section{Introduction}

Hodge-Helmholtz decompositions of square integrable fields, i.e. decompositions in
irrotational and solenoidal fields, are important and strong tools for solving partial differential equations,
for instance, in electro-magnetic theory.

Since formally $\ie\grad$ and $\ie\pdiv$ resp. $\curl$ and $\curl$ are adjoint to each other
and $\curl\grad=0$ and $\pdiv\curl=0$ hold as well, the $\eps$-$\lz$-orthogonal decompositions
\begin{align}
\begin{split}
\lzom&=\clbH{}(\curlonn,\om)\oplus_\eps\eps^\me\clbH{}(\pdivn,\om)\oplus_\eps\dH{}{}{\eps}{}(\om)\qquad,\\
\lzom&=\clbHt{}(\curln,\om)\oplus_\eps\eps^\me\clbHt{}(\pdivonn,\om)\oplus_\eps\nH{}{}{\eps}{}(\om)\qquad,
\end{split}\mylabel{cldeco}
\end{align}
where $\om\subset\rd$ is a domain, are easy consequences of the projection theorem in Hilbert space.
Here $\eps:\om\to\rz^{3\times3}$ is a real valued, symmetric and uniformly bounded and positive definite matrix,
which models material properties, such as the dielectricity or the permeability of the medium, and
$\dH{}{}{\eps}{}(\om)$ resp. $\nH{}{}{\eps}{}(\om)$ denotes the space of Dirichlet resp. Neumann fields.
(See section \ref{clHdef} for the exact definitions of all these spaces.)

This problem may be generalized if we formulate Maxwell's equations in the framework of alternating
differential forms of order $q$\,, short $q$-forms, on some $N$-dimensional Riemannian manifold $\om$\,.
Additionally to the generality and the easy and short notation this approach provides also
a deeper insight into the structure of the underlying problems.
It has become customary following Hermann Weyl \cite{weyl} to denote the exterior derivative
$\pd$ by $\rot$ and the co-derivative $\delta$ by $\pdiv$\,. We will use this notation throughout
this paper and thus we have on $q$-forms
$$\pdiv=(-1)^{(q+1)N}*\rot*\qquad,$$
where $*$ is Hodge's star operator. Since $\rot$ and $\pdiv$ are formally skew adjoint to each other
as well as $\rot\rot=0$ and $\pdiv\pdiv=0$ hold,
the corresponding Hodge-Helmholtz decompositions of $\lz$-forms
\beq\Lzqom{}=\bRom{q}{}{0}\oplus_\eps\eps^\me\bDom{q}{}{0}\oplus_\eps\dhqepsom{}\qquad,\mylabel{qdecon}\eeq
again are easy consequences of the projection theorem.
Here $\eps$ maps $\om$ to the real, linear, symmetric and uniformly bounded and positive definite transformations on $q$-forms.
Furthermore, we denote by $\oplus_\eps$ the orthogonal sum with respect to the $\skp{\eps\,\cdot\,}{\,\cdot\,}_{\lzqom}$-scalar product.
(See section \ref{defsection} for definitions.) For $N=3$ and $q=1$ or $q=2$ we obtain the two
classical decompositions \eqref{cldeco}.

In the case of unbounded domains it is often necessary and useful to work with weighted
Sobolev spaces. Especially in our efforts to completely determine the low frequency asymptotics
of the solutions of the time-harmonic Maxwell equations in exterior domains \cite{paulytimeharm,paulystatic}
and a forthcoming third paper \cite{paulyasym} it has turned out
that decompositions of weighted $\lz$-spaces are necessary and essential tools.

Hence motivated by this in the present paper we want to answer the question,
in which way the weighted $\lz$-space of $q$-forms
$$\Lzqsom:=\setb{F\in\Lzqlocom}{\rho^sF\in\lzqom}\qqtext{,}s\in\rz\qquad,$$
where $\om\subset\rN$ is an exterior domain, i.e. a connected open set with compact complement,
and $\rho:=(1+r^2)^{1/2}$ with $r(x):=|x|$ for $x\in\rN$ denotes a weight function,
may be decomposed into irrotational and solenoidal forms, i.e. $q$-forms with vanishing
rotation $\rot$ resp. divergence $\pdiv$\,.

For the special case $s=0$ Picard has shown \eqref{qdecon} in \cite{decomposition}, \cite{potential}
and (in the classical framework) in \cite{boundaryelectro}, \cite{milani}.
Moreover, for domains $\om$ possessing the `Maxwell local compactness property' {\sf MLCP}
(See section \ref{defsection}.) he proved the representations
\begin{align*}
\bRom{q}{}{0}&=\rot\ronqme{-1}(\om)=\rot\big(\ronqme{-1}(\om)\cap\dqmen{-1}(\om)\big)\qquad,\\
\bDom{q}{}{0}&=\pdiv\dqpe{-1}(\om)=\pdiv\big(\dqpe{-1}(\om)\cap\ronqpen{-1}(\om)\big)\qquad,
\end{align*}
i.e. any form from $\bRom{q}{}{0}$ may be represented as a rotation of a solenoidal form
and any form from $\bDom{q}{}{0}$ may be represented as a divergence of a irrotational form.

Now one may expect for arbitrary $s\in\rz$ the direct decomposition
$$\Lzqsom=\bRqsom\dotplus\eps^\me\bDqsom\dotplus\dhqepsom{s}\qquad.$$
But, as we will see, this holds only for $s$ `near' zero,
since for small $s$ we lose the directness of the decomposition
and for large $s$ the right hand side is too small.
However both negative effects are of finite dimensional nature.

For general $s\in\rz\ohne\pI$ introducing the countable discrete set of (bad) weights
$$\pI=\set{N/2+n}{n\in\nzn}\cup\set{1-N/2-n}{n\in\nzn}$$
Weck and Witsch showed in \cite{sphharm} for the special case $\om=\rN$ and $\eps=\id$\,,
where no Dirichlet forms and no boundary exist, the decompositions
\begin{align*}
\Lzqs=\begin{cases}
\rqsn+\dqsn&,\quad s\in(-\infty,-N/2)\\
\rqsn\dotplus\dqsn&,\quad s\in(-N/2,N/2)\\
\rqsn\dotplus\dqsn\dotplus\calS^q_s&,\quad s\in(N/2,\infty)
\end{cases}
\end{align*}
and the representations
$$\rqsn=\rot\rqme{s-1}\qqtext{,}\dqsn=\pdiv\dqpe{s-1}\qquad.$$
Thereby $\calS^q_s$ is a finite dimensional subspace of $\cqun\big(\rN\ohne\{0\}\big)$
generated by the action of the commutator of the Laplacian and a cut-off function $\eta$\,,
which vanishes near the origin and equals one near infinity,
on the linear hull of some finitely many decaying potential forms in $\rN\ohne\{0\}$\,,
i.e. generalized spherical harmonics multiplied by a negative power of $r$ solving Laplace's equation.
(Here we omit the dependence on the domain $\rN$ and denote the direct sum by $\dotplus$\,.)
We note that Weck and Witsch in \cite{sphharm} even decomposed the Lebesgue-Banach spaces $\text{\rm L}^{p,q}_s$ with $p\in(1,\infty)$ instead of $p=2$\,.
The proof of their results uses heavily the corresponding results for the scalar Laplacian in $\rN$ developed by McOwen in \cite{mcowen}.
For the Hilbert space case $p=2$ these results have been generalized to smooth
(at least $\pc{3}$) exterior domains $\om\subset\rN$ by Bauer
in \cite{bauerdipl}. Unfortunately by their second order approach these techniques
can not be applied to handle inhomogeneities $\eps$ and the smoothness of $\om$ is essential as well.

Results in the classical case $q=N-1$ have been given by Specovius-Neugebauer \cite{specovius}
for $\eps=\id$ and a smooth ($\pc{2}$) exterior domain $\om\subset\rN$\,, $N\geq3$\,.
She considered only this special case and additionally only a weaker version of \eqref{qdecon}\,,
which reads as $\qLzom{N-1}{}=\pr{N-1}{}{0}{\circ}(\om)\oplus\pdiv\pdi{N}{-1}{}{}(\om)$
resp. in the classical language
$$\lzom=\grad\clH{-1}(\grad,\om)\oplus\clH{}(\pdivonn,\om)\qquad.$$
She was able to show for $s\in\rz\ohne\pI$
\begin{align*}
\Lzsom=\begin{cases}
\grad\clH{s-1}(\grad,\om)+\clH{s}(\pdivonn,\om)&,\quad s\in(-\infty,-N/2)\\
\grad\clH{s-1}(\grad,\om)\dotplus\clH{s}(\pdivonn,\om)&,\quad s\in(-N/2,N/2)\\
\grad\clH{s-1}(\grad,\om)\dotplus\clH{s}(\pdivonn,\om)\dotplus\calS_s&,\quad s\in(N/2,\infty)
\end{cases}\qquad,
\end{align*}
where $\calS_s$ corresponds to $\calS^{N-1}_s$\,. We note that she proved the corresponding decompositions even for
the Banach space $\text{\rm L}^p_s(\om)$ with $1<p<\infty$\,. Since she used heavily trace operators and
convolution techniques, her results can not be generalized to nonsmooth boundaries or inhomogeneities $\eps$\,.
Moreover, she showed no further decomposition of $\clH{s}(\pdivonn,\om)$ into Neumann fields and images of $\curl$-terms
(for $N=3$), which is highly important in electro-magnetic theory.

Our main focus is to treat nonsmooth boundaries, i.e. Lipschitz boundaries or even weaker assumptions,
and most of all nonsmooth inhomogeneities corresponding to inhomogeneous and anisotropic media, which are only
asymptotically homogeneous. To the best of our knowledge it was an open question, if those weighted $\lz$-decompositions
hold for inhomogeneous and anisotropic media or for nonsmooth boundaries.
We will allow our transformations $\eps$ to be $\text{\rm L}^\infty$-perturbations of the identity,
i.e. $\eps=\id+\epsd$\,, where $\epsd$ does not need to be compactly supported but decays at infinity.
Moreover, $\epsd$ is not assumed to be smooth. We only require $\epsd\in\pc{1}$
in the outside of an arbitrarily large ball.
Omitting some details for this introductory remarks we will show essentially
for small $s\in(-\infty,-N/2)\ohne\tilde{\pI}$
$$\Lzqsom=\ronqsnom+\eps^\me\dqsnom$$
and for large $s\in(-N/2,\infty)\ohne\tilde{\pI}$
\begin{align*}
\Lzqsom\cap\dhqepsom{}^{\bot_\eps}&=
\begin{cases}
\bRqsom\dotplus\eps^\me\bDqsom&,\quad s<N/2\\
\bRqsom\dotplus\eps^\me\bDqsom\dotplus\Delta_\eps\eta\towerPqsmz&,\quad s>N/2
\end{cases}\quad.
\end{align*}
Here $\Delta_\eps\eta\towerPqsmz$ is a finite dimensional subspace of
$\qhon{1}{q}{s}(\om)\cap\cq{1}(\om)$\,,
whose elements have supports in the outside of an arbitrarily large ball, and $\eta$ is a cut-off function
as before but now vanishing near the boundary $\dom$\,.
The forms from $\towerPqsmz$ are potential forms, i.e. solve Laplace's equation in $\rN\ohne\{0\}$\,, and
$$\Delta_\eps=\rot\pdiv+\eps^\me\pdiv\rot\qquad.$$
In the special case $\eps=\id$ since $\Delta=\rot\pdiv+\pdiv\rot$
(Here the Laplacian $\Delta$ is to be understood componentwise in Euclidean coordinates.) we have like above
$$\Delta\eta\towerPqsmz=C_{\Delta,\eta}\towerPqsmz=\calS^q_s\subset\cqunom\qquad,$$
where $C_{A,B}:=AB-BA$ denotes the commutator of two operators $A$ and $B$\,.
(For details see Theorem \ref{decotheo}.)
Furthermore, $\Lzqsom$ decomposes for large $s$ into $\Lzqsom\cap\dhqepsom{}^{\bot_\eps}$
and the linear hull of finitely many smooth forms, which have bounded supports.
We note that for all $t\in[-N/2,N/2-1)$
the spaces of Dirichlet forms $\dhqepsom{t}$ coincide.
Moreover, for all $s\in\rz\ohne\tilde{\pI}$ the irrotational forms from $\bRqsom$ resp.
the solenoidal forms from $\bDqsom$ can be represented as rotations resp. divergences, i.e.
$$\bRqsom=\rot\ronqme{s-1}(\om)\qqtext{,}\bDqsom=\pdiv\dqpe{s-1}(\om)$$
hold except of some special values of $s$ or $q$\,. But contrarily to the case $s=0$ for large $s>1+N/2$
we lose integrability properties, if we want to represent forms in $\bRqsom$ resp. $\bDqsom$ by
rotations of solenoidal resp. divergences of irrotational forms. Looking at Theorem \ref{decorotdivtheolarge} we obtain
\begin{align*}
\bRqsom&=\rot\Big(\big(\ronqme{s-1}(\om)\boxplus\eta\towerH{q-1}{s-1}\big)\cap\dqmen{<\Nh}(\om)\Big)\qquad,\\
\bDqsom&=\pdiv\Big(\big(\dqpe{s-1}(\om)\boxplus\eta\towerH{q+1}{s-1}\big)\cap\ronqpenom{<\Nh}\Big)\qquad,
\end{align*}
i.e. the representing solenoidal resp. irrotational forms no longer belong to $\qLzom{q\mp1}{s-1}$ but to $\qLzom{q\mp1}{t}$ for all $t<N/2$\,.
(For details see Theorems \ref{decorotdivtheosmall}, \ref{decorotdivtheolarge} and \ref{decorotdivtheoexcept}.)

If we project onto the orthogonal complement of $\dH{q}{-s}{\eps}(\om)$\,, i.e. of more Dirichlet forms,
we finally obtain even for large $s>N/2$
$$\Lzqsom\cap\dhqepsom{-s}^{\bot_\eps}=\bRqsom\oplus_\eps\eps^\me\bDqsom\qquad.$$

\section{Definitions and preliminaries}

We consider an exterior domain $\Omega\subset\rN$\,, i.e. $\rN\ohne\Omega$ is compact,
as a special smooth Riemannian manifold of dimension $3\leq N\in\nz$\,,
and fix a radius $r_0$ and radii $r_n:=2^nr_0$\,, $n\in\nz$\,,
such that $\rN\ohne\Omega$ is a compact subset of $U_{r_0}:=\setb{x\in\rN}{|x|<r_0}$\,.
Moreover, we choose a cut-off function $\eta$\,, such that {\paulytimeharmetadef} hold.
We then have $\eta=0$ in $U_{r_1}$ and $\eta=1$ in $A_{r_2}:=\setb{x\in\rN}{|x|>r_2}$
and thus $\supp\nabla\eta\subset\ol{A_{r_1}\cap U_{r_2}}$\,.\mylabel{secdef}

Throughout this paper we will use the notations from \cite{paulytimeharm} and \cite{paulystatic}.
Considering alternating differential forms of rank $q\in\zz$ (short $q$-forms) we denote the exterior derivative
$\pd$ by $\rot$ and the co-derivative $\delta=\pm*\pd*$ ($*$\,: Hodge star operator) by $\pdiv$
to remind of the electro-magnetic background. On $\cqunom$ \big(the vector space of all $\cu$-$q$-forms with compact support in $\om$\big)
we have a scalar product \mylabel{defsection}
$$\skp{\Phi}{\Psi}_{\lzqom}:=\intom\Phi\wedge*\bar{\Psi}\qquad\forall\quad\Phi,\Psi\in\cqunom$$
and an induced norm $\norm{\,\cdot\,}_{\lzqom}:=\skp{\,\cdot\,}{\,\cdot\,}_{\lzqom}^{1/2}$\,.
Thus we may define (taking the closure in the latter norm)
$$\Lzqom{}:=\ol{\cqunom}\qquad,$$
the Hilbert space of all square integrable $q$-forms on $\om$\,.
Moreover, due to Stokes' theorem on $\cqunom$ the linear operators $\rot$ and $\pdiv$ are
formally skew adjoint to each other, i.e.
$$\skp{\rot\Phi}{\Psi}_{\lzqpeom}=-\skp{\Phi}{\pdiv\Psi}_{\lzqom}$$
for all $(\Phi,\Psi)\in\cqunom\times\cqpeunom$\,, which gives rise to weak formulations of $\rot$ and $\pdiv$\,.
Using these and the weight function $\rho(r):=(1+r^2)^{1/2}$ for $s\in\rz$
we introduce the following weighted Hilbert spaces (endowed with their natural norms) of $q$-forms

\begin{align*}
\Lzqsom&:=\setb{E\in\Lzqlocom}{\rho^sE\in\Lzqom{}}\qquad,\\
\rqsom&:=\setb{E\in\Lzqsom}{\rot E\in\Lzqpeom{s+1}}\qquad,\\
\dqsom&:=\setb{H\in\Lzqsom}{\pdiv H\in\qLzom{q-1}{s+1}}\qquad.
\end{align*}
All these spaces equal zero if $q\notin\{0,\dots,N\}$\,.
Furthermore, taking the closure in $\rqsom$ we introduce the Hilbert space
$$\ronqsom:=\ol{\cqunom}\qquad,$$
which generalizes the boundary condition of vanishing tangential component
of a $q$-form at the boundary $\dom$\,. More precisely this generalizes the boundary condition $\iota^*E=0$\,, which means that
the pull-back of $E$ on the boundary of $\om$ \big(considered as a $(N-1)$-dimensional Riemannian submanifold of $\omq$\big) vanishes.
Here $\iota:\p\om\hookrightarrow\ol{\om}$ denotes the natural embedding.

A lower left index $0$ indicates vanishing rotation resp. divergence.

For weighted Sobolev spaces $V_s$\,, $s\in\rz$\,, we define
$$V_{<t}:=\bigcap_{s<t}V_s\qquad.$$
We only consider exterior domains $\om$\,, which possess the `Maxwell local compactness property' {\sf MLCP},
i.e. for all $q$ and all $t<s$ the embeddings
$$\ronqsom\cap\dqsom\hookrightarrow\Lzqtom$$
are compact. \big(See {\paulytimeharmmlcpdef} and the literature cited there.\big)

We assume our real valued transformations to be $\tau$-admissible resp.
$\tau$-$\pc{1}$-admissible as defined in {\paulytimeharmdefeps}. This means shortly
that they generate scalar products on $\lzqom$ and are asymptotically the identity mapping.
The parameter $\tau$ always denotes this rate of convergence and the perturbations only have to be $\pc{1}$
in the outside of an arbitrarily large ball. Hence we may choose $r_0$\,,
such that the transformations are $\pc{1}$ in $A_{r_0}$\,.

Let $\eps$ be a $\tau$-$\pc{1}$-admissible transformation on $q$-forms with some $\tau>0$.
We need the finite dimensional vector space of Dirichlet forms
$$\dhqepsom{t}:=\ronqnom{t}\cap\eps^\me\dqnom{t}\qqtext{,}t\in\rz\qquad.$$
(Here we neglect the indices $\eps$ or $t$ in the cases $\eps=\id$ or $t=0$\,.)
Citing {\paulystaticintdirichlet} we have
$$\dhqepsom{-\Nh}=\dhqepsom{}=\dhqepsom{<\Nh-1}$$
and even $\dhqepsom{}=\dhqepsom{<\Nh}$ if $q\notin\{1,N-1\}$\,.
Thus $\dhqepsom{}\subset\Lzqom{-s}$ for $s>1-N/2$ and even for $s>-N/2$ if $q\notin\{1,N-1\}$\,.

Furthermore, for $\rz\ni s>1-N/2$ we introduce the Hilbert spaces
\beq\bDqsom=\dqsnom\cap\dhqepsom{}^\bot\qtext{,}\bRqsom=\ronqsnom\cap\dhqepsom{}^{\bot_\eps}\quad,\mylabel{bDbRdef}\eeq
where we denote by $\bot_\eps$ the orthogonality with respect to the $\skp{\eps\,\cdot\,}{\,\cdot\,}_{\lzqom}$-scalar product,
i.e. the duality between $\Lzqtom$ and $\Lzqom{-t}$\,. If $\eps=\id$ we simply write $\bot:=\bot_{\id}$\,.
The restrictions on the weights $s$ guarantee $\dhqepsom{}\subset\Lzqom{-s}$\,. But if $q\notin\{1,N-1\}$
also $\dhqepsom{}\subset\Lzqom{<\Nh}$ holds and these definitions extend to $s>-N/2$\,.
Since there are no Dirichlet forms $\dhqepsom{}$
for $q\in\{0,N\}$ in these special cases the definitions \eqref{bDbRdef}
may be extended to all $s\in\rz$ and we have
\begin{align*}
\bRom{0}{s}{0}&=\pr{0}{s}{0}{\circ}(\om)=\{0\}
&&,&\bRom{N}{s}{0}&=\pr{N}{s}{0}{\circ}(\om)=\qLzom{N}{s}&&,\\
\bDom{0}{s}{0}&=\pdi{0}{s}{0}{}(\om)=\qLzom{0}{s}
&&,&\bDom{N}{s}{0}&=\pdi{N}{s}{0}{}(\om)=\begin{cases}\{0\}&, s\geq -N/2\\\Lin\{*\Eins\}&, s<-N/2\end{cases}&&.
\end{align*}
Moreover, there are some other characterizations of these spaces.
We remind of the finitely many special smooth forms $\bonqom\subset\ronqnom{}$ and $\bqom\subset\dqnom{}$
presented in {\paulystaticsecgenstatic},
which have compact resp. bounded supports in $\om$ and the properties
\beq\dhqepsom{}\cap\bonqom{}^{\bot_\eps}=\dhqepsom{}\cap\bqom{}^\bot=\{0\}\qquad.\mylabel{dirichletb}\eeq
We note in passing
$$\dim\dhqepsom{}=\dim\dhqom{}=\#\bonqom{}=\#\bqom{}=:d^q\in\nzn\qquad.$$
Using {\paulystaticdomromequi} we see in fact that
\begin{align*}
\bDqsom&=\dqsnom\cap\dhqom{}^\bot=\dqsnom\cap\bonqom^\bot\qquad,\\
\bRqsom&=\ronqsnom\cap\dhqom{}^\bot=\ronqsnom\cap\bqom^\bot
\end{align*}
do not depend on the transformation $\eps$\,.
Since $\bqom$ is only defined for $q\neq1$ the last characterization in the second equation holds only for $q\neq1$\,.
Now the definitions of $\bDqsom$ and $\bRqsom$ extend to arbitrary weights $s\in\rz$ because
the forms $\bonqom,\bqom$ have bounded supports.
We say that $\om$ possesses the `static Maxwell property' {\sf SMP},
if and only if $\om$ has the {\sf MLCP} and the forms $\bonqom,\bqom$ exist.
For instance, the {\sf SMP} is guaranteed for Lipschitz domains $\om$\,.
\big(See {\paulystaticsecgenstatic} and the literature cited there.\big)
We may choose $r_0$\,, such that $\supp b\subset U_{r_0}$ for all $b\in\bonqom\cup\bqom$ and all $q$\,.

Finally for $s>1-N/2$ or $s>-N/2$ and $q\notin\{1,N-1\}$ we put
$$\bLzqsepsom:=\Lzqsom\cap\dhqepsom{}^{\bot_\eps}\qquad.$$

We also need the negative `tower forms' $\turmd{q}{\ell}{\sigma}{m}{-}$\,, $\turmr{q}{\ell}{\sigma}{m}{-}$
for the values $\ell=0,1,2$ and $\sigma\in\nzn$\,, $m\in\{1,\dots,\mu^q_\sigma\}$ from {\paulystaticsectower},
which are harmonic polynomials except of a multiplication by some negative integer power of $r$\,.
These forms are homogeneous of degree $\homg{\ell}{\sigma}{-}:=\ell-\sigma-N$\,,
belong to $\cqu\big(\rNon\big)$ and satisfy the `tower equations'
\begin{align*}
\rot\turmd{q}{0}{\sigma}{m}{-}&=0&&,\qquad&\pdiv\turmr{q+1}{0}{\sigma}{m}{-}&=0&&,\\
\pdiv\turmd{q}{\ell}{\sigma}{m}{-}&=0&&,&\rot\turmr{q+1}{\ell}{\sigma}{m}{-}&=0&&,\\
\rot\turmd{q}{k}{\sigma}{m}{-}&=\turmr{q+1}{k-1}{\sigma}{m}{-}&&,&\pdiv\turmr{q+1}{k}{\sigma}{m}{-}&=\turmd{q}{k-1}{\sigma}{m}{-}&&,
\end{align*}
where $\ell=0,1,2$ and $k=1,2$\,. (We note briefly that we need the positive tower forms of height zero
$\turmd{q}{0}{\sigma}{m}{+}$\,, $\turmr{q}{0}{\sigma}{m}{+}$ in our proofs as well.
But they are not required to formulate our results.)
From {\paulystaticremint} we have for all $\sigma\in\nzn$\,, $m\in\{1,\dots,\mu^q_\sigma\}$ and all $\ell=0,1,2$
as well as all $k\in\nzn$
$$\turmd{q}{\ell}{\sigma}{m}{-}\in\Lzqs(A_1)\quad\equi\quad\turmd{q}{\ell}{\sigma}{m}{-}\in\qh{k}{q}{s}{}(A_1)\quad\equi\quad s<N/2+\sigma-\ell\qquad,$$
which completely determines the integrability properties of our tower forms at infinity.
The same integrability holds true for $\turmr{q}{\ell}{\sigma}{m}{-}$\,.
Moreover, the ground forms (forms of height $0$), which only occur for $1\leq q\leq N-1$\,,
are linear dependent, i.e. we have
\beq\alpha^q_\sigma\cdot\turmr{q}{0}{\sigma}{m}{-}+\ie\alpha^{q'}_\sigma\cdot\turmd{q}{0}{\sigma}{m}{-}=0\qquad,\mylabel{DRlindep}\eeq
where $\alpha^q_\sigma:=(q+\sigma)^{1/2}$ and $q':=N-q$\,.
This motivates to define the harmonic tower forms
\begin{align}
H^q_{\sigma,m}&:=\alpha^q_\sigma\cdot\turmr{q}{0}{\sigma}{m}{-}=-\ie\alpha^{q'}_\sigma\cdot\turmd{q}{0}{\sigma}{m}{-}\mylabel{Hdef}
\intertext{and the potential tower forms}
P^q_{\sigma,m}&:=\alpha^q_\sigma\cdot\turmr{q}{2}{\sigma}{m}{-}+\ie\alpha^{q'}_\sigma\cdot\turmd{q}{2}{\sigma}{m}{-}\qquad.\mylabel{Qdef}
\end{align}
Since $\Delta=\rot\pdiv+\pdiv\rot$ we then obtain
$$\Delta\turmd{q}{\ell}{\sigma}{m}{-}=\Delta\turmr{q}{\ell}{\sigma}{m}{-}=\Delta H^q_{\sigma,m}=\Delta P^q_{\sigma,m}=0\qqtext{,}\ell=0,1\qquad.$$
Here $\Delta$ denotes the componentwise scalar Laplacian in Euclidean coordinates.
We note also that $P^q_{\sigma,m}=H^q_{\sigma,m}=0$ if $q\in\{0,N\}$\,.
Furthermore, for $s\in\rz$ and $\ell=0,1,2$ we introduce the finite dimensional vector spaces
\begin{align*}
\towerD{q,\ell}{s}&:=\Lin\setb{\turmd{q}{\ell}{\sigma}{m}{-}}{\turmd{q}{\ell}{\sigma}{m}{-}\notin\Lzqs(A_1)}=\Lin\set{\turmd{q}{\ell}{\sigma}{m}{-}}{\sigma\leq s-N/2+\ell}\qquad,\\
\towerR{q,\ell}{s}&:=\Lin\setb{\turmr{q}{\ell}{\sigma}{m}{-}}{\turmr{q}{\ell}{\sigma}{m}{-}\notin\Lzqs(A_1)}=\Lin\set{\turmr{q}{\ell}{\sigma}{m}{-}}{\sigma\leq s-N/2+\ell}\qquad,\\
\towerH{q}{s}&:=\Lin\setb{H^q_{\sigma,m}}{H^q_{\sigma,m}\notin\Lzqs(A_1)}=\Lin\set{H^q_{\sigma,m}}{\sigma\leq s-N/2}\qquad,\\
\towerP{q}{s}&:=\Lin\setb{P^q_{\sigma,m}}{P^q_{\sigma,m}\notin\Lzqs(A_1)}=\Lin\set{P^q_{\sigma,m}}{\sigma\leq s-N/2+2}\qquad.
\end{align*}
\big(Here we set $\Lin\emptyset:=\{0\}$\,.\big) We note
$$\towerH{q}{s}=\towerP{q}{s-2}=\towerD{q,\ell}{s-\ell}=\towerR{q,\ell}{s-\ell}=\{0\}\quad\equi\quad s<N/2\qquad.$$
Unfortunately due to the fact that $\rot r^{2-N}$ (the gradient of $r^{2-N}$ in classical terms)
is irrotational and solenoidal but is itself no divergence,
there exist four exceptional tower forms. These are (up to constants)
$$\check{P}^0:=r^{2-N}=\turmd{0}{2}{0}{1}{-}\qqtext{,}
\check{H}^1:=\rot\check{P}^0=r^{1-N}\pd r=\turmr{1}{1}{0}{1}{-}$$
and their dual forms $\check{P}^N:=*\check{P}^0=\turmr{N}{2}{0}{1}{-}$\,, $\check{H}^{N-1}:=*\check{H}^1=\turmd{N-1}{1}{0}{1}{-}$\,.
We then have $\pdiv\check{P}^N=\check{H}^{N-1}$\,.
Following the construction of the regular tower forms we define for $s\geq N/2-2$
\begin{align*}
\towerPh{0}{}&:=\towerPh{0}{s}:=\Lin\{\check{P}^0\}&&,&\towerPh{N}{}&:=\towerPh{N}{s}:=\Lin\{\check{P}^N\}
\intertext{and for $s\geq N/2-1$  }
\towerHh{1}{}&:=\towerHh{1}{s}:=\Lin\{\check{H}^1\}&&,&\towerHh{N-1}{}&:=\towerHh{N-1}{s}:=\Lin\{\check{H}^{N-1}\}\qquad.
\end{align*}
For all other values of $s$ and $q$ we put $\towerPh{q}{s}:=\{0\}$ and $\towerHh{q}{s}:=\{0\}$\,.

As described in {\paulystaticsecstaticoperators} for $s\in\rz$ we will consider vector spaces
$$V^q_s\dotplus\eta\calV^q_s\qqtext{,}\text{($\dotplus$\,: direct sum)}$$
where $V^q_s\subset\Lzqsom$ is some Hilbert space and
$\calV^q_s$ is some finite subset of our tower forms,
e.g. $V^q_s=\ronqsom\cap\dqsom$ and $\calV^q_s=\towerH{q}{s}$\,.
On $V^q_s\dotplus\eta\calV^q_s$ we define a scalar product, such that
\begin{itemize}
\item\quad in $V^q_s$ the original scalar product is kept,
\item\quad $\eta\calV^q_s$ is an orthonormal system,
\item\quad the sum $V^q_s\dotplus\eta\calV^q_s=V^q_s\boxplus\eta\calV^q_s$ is orthogonal.
\end{itemize}
As already indicated we denote the orthogonal sum with respect to this new inner product by $\boxplus$
and clearly $V^q_s\boxplus\eta\calV^q_s$ is a Hilbert space since $\calV^q_s$ is finite.

\section{Results}

Let $\om\subset\rN$ $(N\geq3)$ be an exterior domain as in the last section with the {\sf SMP} or the {\sf MLCP}
depending on whether the forms $\bonqom,\bqom$ are involved in our considerations or not.
Recalling from {\paulystaticsecstaticoperators} the set of special weights $\pI$ we put \mylabel{results}
$$\tilde{\pI}:=\pI-1=\set{N/2+n-1}{n\in\nzn}\cup\set{-N/2-n}{n\in\nzn}$$
and from now on we make the following general assumptions:

\begin{itemize}
\item\quad $q\in\{0,\dots,N\}$
\item\quad $s\in\rz\ohne\tilde{\pI}$\,, i.e. $s+1\in\rz\ohne\pI$\,, i.e. for all $n\in\nzn$
$$s\neq n+N/2-1\qqtext{and}s\neq -n-N/2\qquad.$$
\item\quad $\eps$ is a $\tau$-$\pc{1}$-admissible transformation on $q$-forms with some $\tau=\tau_{s+1}$ satisfying
$$\tau>\max\{0,s+1-N/2\}\qqtext{and}\tau\geq-s-1\qquad,$$
i.e.
$$\tau\,\begin{cases}\,\geq-s-1&,\,s\in(-\infty,-1)\\\,>0&,\,s\in[-1,N/2-1]\\\,>s+1-N/2&,\,s\in(N/2-1,\infty)\end{cases}\qquad.$$
\item\quad $\nu$ and $\mu$ are $\tilde{\tau}$-$\pc{1}$-admissible transformation on $(q-1)$- resp $(q+1)$-forms
with some $\tilde{\tau}=\tau_s$ satisfying
$$\tilde{\tau}>\max\{0,s-N/2\}\qqtext{and}\tilde{\tau}\geq-s\qquad,$$
i.e.
$$\tilde{\tau}\,\begin{cases}\,\geq-s&,\,s\in(-\infty,0)\\\,>0&,\,s\in[0,N/2]\\\,>s-N/2&,\,s\in(N/2,\infty)\end{cases}\qquad.$$
\end{itemize}
If $1-N/2<s<N/2-1$ or $-N/2<s<N/2$ and $q\notin\{1,N-1\}$ the 'trivial' orthogonal decomposition
\beq\Lzqsom=\bLzqsepsom\oplus_\eps\dhqepsom{}\mylabel{trivodeco}\eeq
holds. Throughout the paper we will denote the orthogonality with respect to the $\skp{\eps\,\cdot\,}{\,\cdot\,}_{\lzqom}$-scalar product
or $\Lzqsom$-$\Lzqom{-s}$-duality by $\oplus_\eps$ and put $\oplus=\oplus_{\id}$\,.

The first lemma shows how one may get rid of Dirichlet forms even for larger weights.

\begin{lem}\mylabel{lzdirichletzerl}
Let $s>1-N/2$\,. Then the direct decompositions
$$\Lzqsom=\bLzqsepsom\dotplus\Lin\bonqom\qqtext{,}\Lzqsom=\bLzqsepsom\dotplus\eps^\me\Lin\bqom$$
hold, where the latter is only defined for $q\neq1$\,. If $q\notin\{1,N-1\}$ this decompositions hold for $s>-N/2$\, as well.
\end{lem}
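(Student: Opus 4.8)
The plan is to read both decompositions as the splitting of $\Lzqsom$ off the finite dimensional space of Dirichlet forms, with $\Lin\bonqom$ (respectively $\eps^\me\Lin\bqom$) serving as the complementary summand. By definition $\bLzqsepsom=\Lzqsom\cap\dhqepsom{}^{\bot_\eps}$ is the annihilator in $\Lzqsom$ of $\dhqepsom{}$ under the $\Lzqsom$-$\Lzqom{-s}$ duality $\skp{\eps\,\cdot\,}{\,\cdot\,}_{\lzqom}$. First I would record the two facts that make the objects well defined: the weight bound $s>1-N/2$ (respectively $s>-N/2$ when $q\notin\{1,N-1\}$, using $\dhqepsom{}\subset\Lzqom{<\Nh}$) is exactly what guarantees $\dhqepsom{}\subset\Lzqom{-s}$, so that the pairing against elements of $\Lzqsom$ makes sense; and $\Lin\bonqom\subset\Lzqsom$ (as well as $\eps^\me\Lin\bqom\subset\Lzqsom$) for every weight, since the forms in $\bonqom$ and $\bqom$ have bounded support, on which $\rho^s$ and the admissible $\eps^\me$ are bounded.

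Next I would fix a basis $d_1,\dots,d_{d^q}$ of $\dhqepsom{}$, recalling $\dim\dhqepsom{}=d^q=\#\bonqom$, and write $\bonqom=\{b_1,\dots,b_{d^q}\}$. For $E\in\Lzqsom$ the demand $E-\sum_j c_jb_j\in\bLzqsepsom$ is, after testing against each $d_k$, the linear system $\sum_j\skp{\eps b_j}{d_k}_{\lzqom}\,c_j=\skp{\eps E}{d_k}_{\lzqom}$, $k=1,\dots,d^q$. Thus the whole statement reduces to the invertibility of the $d^q\times d^q$ Gram matrix $G:=\big(\skp{\eps b_j}{d_k}_{\lzqom}\big)_{k,j}$: invertibility yields a unique coefficient vector $(c_j)$, which gives both the decomposition $\Lzqsom=\bLzqsepsom+\Lin\bonqom$ and, on taking $E=0$, the triviality $\bLzqsepsom\cap\Lin\bonqom=\{0\}$, i.e. the directness $\dotplus$.

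The crux is therefore the invertibility of $G$, and here I would invoke \eqref{dirichletb}. That relation, $\dhqepsom{}\cap\bonqom{}^{\bot_\eps}=\{0\}$, says precisely that the sesquilinear pairing $\dhqepsom{}\times\Lin\bonqom\ni(d,b)\mapsto\skp{\eps d}{b}_{\lzqom}$ is non-degenerate in its first slot. Its representing matrix coincides with $G$ up to conjugate transposition; since $\dhqepsom{}$ and $\Lin\bonqom$ both have dimension $d^q$, the matrix is square, and non-degeneracy in one slot forces full rank, hence invertibility. As a byproduct the $b_j$ are automatically linearly independent, so $\dim\Lin\bonqom=d^q$ as used above. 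I expect this passage --- upgrading the one-sided non-degeneracy \eqref{dirichletb} to invertibility via the equality of dimensions --- to be the only genuinely load-bearing step, the remainder being bookkeeping about admissible weights.

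For the second decomposition I would run the identical argument with the unknown written as $\eps^\me\sum_j c_jb'_j$, $b'_j\in\bqom$: the orthogonality condition then reads $\skp{\eps E}{d_k}_{\lzqom}-\sum_j\skp{b'_j}{d_k}_{\lzqom}\,c_j=0$, so the relevant matrix is $\big(\skp{b'_j}{d_k}_{\lzqom}\big)$, whose invertibility is delivered by the companion relation $\dhqepsom{}\cap\bqom{}^\bot=\{0\}$ in \eqref{dirichletb} together with $\#\bqom=d^q$. This variant is stated only for $q\neq1$ because $\bqom$ exists only there, and the enlarged range $s>-N/2$ for $q\notin\{1,N-1\}$ requires no new idea once one notes $\dhqepsom{}\subset\Lzqom{<\Nh}$ in those ranks.
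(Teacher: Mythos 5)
Your proposal is correct and follows essentially the same route as the paper: both reduce the lemma to the invertibility of the $d^q\times d^q$ pairing matrix between $\Lin\bonqom$ (resp.\ $\bqom$) and $\dhqepsom{}$, which is exactly what \eqref{dirichletb} together with the dimension count $\#\bonqom=\#\bqom=\dim\dhqepsom{}=d^q$ delivers. The paper merely packages this inversion differently, using the Helmholtz decomposition to pre-select a basis $b_\ell=\Phi_\ell+H_\ell$ of $\Lin\bonqom$ whose Dirichlet components $H_\ell$ form an $\oplus_\eps$-orthonormal basis, so that your Gram matrix becomes the identity and the correction coefficients are simply $\skp{E}{\eps H_\ell}_{\lzqom}$.
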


To formulate our main decomposition result we need the operator (a perturbation of the Laplacian $\Delta$)
$$\Delta_\eps:=\rot\pdiv+\eps^\me\pdiv\rot=\Delta+\epsh\pdiv\rot\qquad,$$
where $\eps^\me=:\id+\epsh$ is also $\tau$-$\pc{1}$-admissible. We obtain

\begin{theo}\mylabel{decotheo}
The following decompositions hold:
\begin{itemize}
\item[\bf(i)] If $s<-N/2$\,, then
\begin{align*}
\Lzqsom&=\ronqsnom+\eps^\me\dqsnom
\intertext{and the intersection equals the finite dimensional space of Dirichlet forms
$\dH{q}{s}{\eps}(\om)$\,. Moreover,}
\Lzqsom&=\ronqsnom+\eps^\me\bDqsom
\intertext{and for $q\neq1$ even}
\Lzqsom&=\bRqsom+\eps^\me\bDqsom\qquad.
\end{align*}
In both cases the intersection equals the finite dimensional space of weighted Dirichlet forms
$\dH{q}{s}{\eps}(\om)\cap\bonqom{}^{\bot_\eps}$\,.
\item[\bf(ii)] If $-N/2<s\leq1-N/2$\,, then
\begin{align*}
\qLzsom{1}&=\pr{1}{s}{0}{\circ}(\om)\dotplus\eps^\me\bDom{1}{s}{0}&&,\\
\qLzsom{N-1}&=\bRom{N-1}{s}{0}\dotplus\eps^\me\bDom{N-1}{s}{0}\dotplus\dH{N-1}{}{\eps}(\om)&&.
\end{align*}
\item[\bf(iii)] If $1-N/2<s<N/2$ or $-N/2<s\leq1-N/2$ and $q\notin\{1,N-1\}$\,, then
$$\bLzqsepsom=\bRqsom\dotplus\eps^\me\bDqsom\qquad.$$
For $s\geq0$ this decomposition is even $\skp{\eps\,\cdot\,}{\,\cdot\,}_{\lzqom}$-orthogonal.
\item[\bf(iv)] If $s>N/2$\,, then
\begin{align*}
\bLzqsepsom&=\Big(\big([\Lzqsom\boxplus\eta\towerHqs]\cap\bRom{q}{<\Nh}{0}\big)\\
&\qquad\qquad\oplus_\eps\eps^\me\big([\Lzqsom\boxplus\eta\towerHqs]\cap\bDom{q}{<\Nh}{0}\big)\Big)\cap\Lzqsom
\intertext{and}
\bLzqsepsom&=\bRqsom\dotplus\eps^\me\bDqsom\dotplus\Delta_\eps\eta\towerPqsmz\qquad,
\end{align*}
where the first two terms in the second decomposition are
$\skp{\eps\,\cdot\,}{\,\cdot\,}_{\lzqom}$-orthogonal as well. Furthermore,
$$\Lzqsom\cap\dhqepsom{-s}^{\bot_\eps}=\bRqsom\oplus_\eps\eps^\me\bDqsom\qquad.$$
\end{itemize}
\end{theo}

\begin{rem}\mylabel{decorem}
\begin{itemize}
\item The decompositions in {\bf (ii)}-{\bf (iv)} are direct and define continuous projections.
\item In {\bf (ii)} we are forced to use the forms $\bqom$ and $\bonqom$ in the definitions of $\bRqsom$ and $\bDqsom$\,.
\item To prove the last equation in {\bf (iv)} we additionally assume $\tau\geq N/2-1$\,.
\item The coefficients of the tower forms in the first equation of {\bf (iv)} are related in the following way: If
$$F_{r,s}+\sum_{\ell}h_{r,\ell}\cdot\eta H_\ell+\eps^\me\big(F_{d,s}+\sum_{\ell}h_{d,\ell}\cdot\eta H_\ell\big)=F\in\bLzqsepsom$$
with $F_{r,s}\in\ronqsom$\,, $F_{d,s}\in\dqsom$ and $H_\ell\in\towerHqs$ as well as $h_{r,\ell},h_{d,\ell}\in\cz$\,, then
$$h_{r,\ell}+h_{d,\ell}=0\qquad,$$
since the $H_\ell$ are linear independent and do not belong to $\Lzqsom$\,.
\item $\Delta_\eps\eta\towerPqsmz$ is a finite dimensional subspace of
$\qhon{1}{q}{s}(\om)\cap\cq{1}(\om)$\,,
whose elements have supports in $\ol{A_{r_1}}$\,.
\item For $s<-N/2$ (and $\tau\geq N/2-1$) we have
$$\dH{q}{s}{\eps}(\om)=\dhqepsom{}\dotplus\dH{q}{s}{\eps}(\om)\cap\bonqom{}^{\bot_\eps}\qquad.$$
\item Clearly the transformation $\eps$ may be moved to the $\rot$-free terms in our decompositions as well.
\end{itemize}
\end{rem}

Our decompositions and representations may be refined. For small weights we get

\begin{theo}\mylabel{decorotdivtheosmall}
Let $s<N/2+1-\delta_{q,0}-\delta_{q,N}$\,. Then $\bDqsom$ and $\bRqsom$ are closed subspaces of $\Lzqsom$ whenever they exist and
\begin{align*}
\text{\bf(i)}&&\bRqsom&=\rot\big(\ronqme{s-1}(\om)\cap\nu^\me\bDom{q-1}{s-1}{0}\big)\\
&&&=\rot\big(\ronqme{s-1}(\om)\cap\nu^\me\dqmen{s-1}(\om)\big)=\rot\ronqme{s-1}(\om)
\intertext{\qquad\qquad\qquad holds for $2\leq q\leq N$ as well as for $q=1$ and $s>1-N/2$\,,}
\text{\bf(ii)}&&\bDqsom&=\pdiv\big(\dqpe{s-1}(\om)\cap\mu^\me\bRom{q+1}{s-1}{0}\big)
\intertext{\qquad\qquad\qquad holds for $1\leq q\leq N-1$ as well as for $q=0$ and $s>2-N/2$\,,}
\text{\bf(iii)}&&\bDqsom&=\pdiv\big(\dqpe{s-1}(\om)\cap\mu^\me\ronqpen{s-1}(\om)\big)=\pdiv\dqpe{s-1}(\om)
\intertext{\qquad\qquad\qquad holds for $0\leq q\leq N-1$\,.}
\end{align*}
\end{theo}

For large weights we have

\begin{theo}\mylabel{decorotdivtheolarge}
Let $1\leq q\leq N-1$\,. Then for $s>N/2+1$
\begin{align*}
\text{\bf(i)}&&\bRqsom&=\rot\Big(\big(\ronqme{s-1}(\om)\boxplus\eta\towerH{q-1}{s-1}\big)\cap\nu^\me\bDom{q-1}{<\Nh}{0}\Big)\\
&&&=\rot\big(\ronqme{s-1}(\om)\cap\nu^\me\dqme{s-1}(\om)\cap\pb{q-1}{\circ}(\om)^{\bot_\nu}\big)=\rot\ronqme{s-1}(\om)\quad,\\
\text{\bf(ii)}&&\bDqsom&=\pdiv\Big(\big(\dqpe{s-1}(\om)\boxplus\eta\towerH{q+1}{s-1}\big)\cap\mu^\me\bRom{q+1}{<\Nh}{0}\Big)\\
&&&=\pdiv\big(\dqpe{s-1}(\om)\cap\mu^\me\ronqpe{s-1}(\om)\cap\B^{q+1}(\om)^{\bot_\mu}\big)=\pdiv\dqpe{s-1}(\om)
\intertext{are closed subspaces of $\Lzqsom$ and for $s>N/2$}
\text{\bf(iii)}&&&\qquad\big(\Lzqsom\boxplus\eta\towerHqs\big)\cap\bRom{q}{<\Nh}{0}\\
&&&=\rot\Big(\big(\ronqme{s-1}(\om)\boxplus\eta\towerD{q-1,0}{s-1}\boxplus\eta\towerD{q-1,1}{s-1}\big)\cap\nu^\me\bDom{q-1}{<\Nh-1}{0}\Big)\\
&&&=\bRqsom\dotplus\rot\nu^\me\eta\towerD{q-1,1}{s-1}\qquad,\\
&&&\\
\text{\bf(iv)}&&&\qquad\big(\Lzqsom\boxplus\eta\towerHqs\big)\cap\bDom{q}{<\Nh}{0}\\
&&&=\pdiv\Big(\big(\dqpe{s-1}(\om)\boxplus\eta\towerR{q+1,0}{s-1}\boxplus\eta\towerR{q+1,1}{s-1}\big)\cap\mu^\me\bRom{q+1}{<\Nh-1}{0}\Big)\\
&&&=\bDqsom\dotplus\pdiv\mu^\me\eta\towerR{q+1,1}{s-1}
\end{align*}
are closed subspaces of $\Lzqsom\boxplus\eta\towerHqs$\,.
\end{theo}

\begin{rem}\mylabel{decorotdivremlargetower}
We note $\pdiv\eta\turmd{q-1}{1}{\sigma}{m}{-}=0$ and $\rot\eta\turmr{q+1}{1}{\sigma}{m}{-}=0$ by {\paulystaticoddtower} and thus
$$\eta\towerD{q-1,1}{s-1}\subset\bDom{q-1}{<\Nh-1}{0}\qqtext{,}\eta\towerR{q+1,1}{s-1}\subset\bRom{q+1}{<\Nh-1}{0}\qquad.$$
\end{rem}

\begin{rem}\mylabel{decorotdivremlarge}
Since there are no regular harmonic tower forms in the cases $q\in\{0,N\}$\,,
i.e. $\towerH{0}{s}=\{0\}$\,, $\towerH{N}{s}=\{0\}$\,, and because of $\eta\towerHqs\subset\Lzqom{<\Nh}$
the first equations in {\bf(iii)} and {\bf(iv)} simplify:\\
If $s>N/2$\,, then
\begin{align*}
\big(\qLz{1}{s}(\om)\boxplus\eta\towerH{1}{s}\big)\cap\bRom{1}{<\Nh}{0}
&=\rot\big(\pr{0}{s-1}{}{\circ}(\om)\boxplus\eta\towerD{0,1}{s-1}\big)\qquad,\\
\big(\qLz{N-1}{s}(\om)\boxplus\eta\towerH{N-1}{s}\big)\cap\bDom{N-1}{<\Nh}{0}
&=\pdiv\big(\pdi{N}{s-1}{}{}(\om)\boxplus\eta\towerR{N,1}{s-1}\big)\qquad.
\end{align*}
If $N/2<s<N/2+1$\,, then
\begin{align*}
\big(\Lzqsom\boxplus\eta\towerHqs\big)\cap\bRom{q}{<\Nh}{0}
&=\rot\Big(\big(\ronqme{s-1}(\om)\boxplus\eta\towerD{q-1,1}{s-1}\big)\cap\nu^\me\bDom{q-1}{<\Nh-1}{0}\Big)\quad,\\
\big(\Lzqsom\boxplus\eta\towerHqs\big)\cap\bDom{q}{<\Nh}{0}
&=\pdiv\Big(\big(\dqpe{s-1}(\om)\boxplus\eta\towerR{q+1,1}{s-1}\big)\cap\mu^\me\bRom{q+1}{<\Nh-1}{0}\Big)\quad.
\end{align*}
\end{rem}

As already mentioned above there are no harmonic tower forms in the remaining cases $q\in\{0,N\}$\,.
Thus the equations in (i), (ii) and (iii), (iv) of the latter theorem would coincide for these values.
Furthermore, in these special cases there occur the exceptional tower forms. We obtain

\begin{theo}\mylabel{decorotdivtheoexcept}
Let $s>N/2$\,. Then
\begin{align*}
\text{\bf(i)}&&\qLz{N}{s}(\om)&=\bRom{N}{s}{0}\\
&&&=\rot\Big(\big(\pr{N-1}{s-1}{}{\circ}(\om)\boxplus\eta\towerH{N-1}{s-1}\boxplus\eta\towerHh{N-1}{}\big)\cap\nu^\me\bDom{N-1}{<\Nh-1}{0}\Big)\\
&&&=\rot\Big(\big(\pr{N-1}{s-1}{}{\circ}(\om)\cap\nu^\me\pdi{N-1}{s-1}{}{}(\om)\cap\pb{N-1}{\circ}(\om)^{\bot_\nu}\big)\boxplus\eta\towerHh{N-1}{}\Big)\\
&&&=\rot\big(\pr{N-1}{s-1}{}{\circ}(\om)\cap\nu^\me\pdi{N-1}{s-1}{}{}(\om)\cap\pb{N-1}{\circ}(\om)^{\bot_\nu}\big)\dotplus\Delta\eta\towerPh{N}{}\\
&&&=\rot\pr{N-1}{s-1}{}{\circ}(\om)\dotplus\Delta\eta\towerPh{N}{}\qquad,\\
\\
\text{\bf(ii)}&&\qLz{0}{s}(\om)&=\bDom{0}{s}{0}\\
&&&=\pdiv\Big(\big(\pdi{1}{s-1}{}{}(\om)\boxplus\eta\towerH{1}{s-1}\boxplus\eta\towerHh{1}{}\big)\cap\mu^\me\bRom{1}{<\Nh-1}{0}\Big)\\
&&&=\pdiv\Big(\big(\pdi{1}{s-1}{}{}(\om)\cap\mu^\me\pr{1}{s-1}{}{\circ}(\om)\big)\boxplus\eta\towerHh{1}{}\Big)\\
&&&=\pdiv\big(\pdi{1}{s-1}{}{}(\om)\cap\mu^\me\pr{1}{s-1}{}{\circ}(\om)\big)\dotplus\Delta\eta\towerPh{0}{}\\
&&&=\pdiv\pdi{1}{s-1}{}{}(\om)\dotplus\Delta\eta\towerPh{0}{}\qquad.
\end{align*}
\end{theo}

Finally we note

\begin{rem}\mylabel{endrem}
We always get easily dual results using the Hodge star operator.
This would change the homogeneous boundary condition from the tangential (electric) to the
normal (magnetic) one. However, since this would multiply the number of results by two
we let their formulation to the interested reader.
\end{rem}

\section{Proofs}

Let $\eps$\,, $\nu$ and $\mu$ be as in section \ref{results}. We start with the

\vspace*{2mm}
\begin{proof}{\bf of Lemma \ref{lzdirichletzerl} }
Let $E\in\Lzqsom$\,. Looking at {\paulystaticsecgenstatic} and using the Helmholtz decompositions {\paulytimeharmlzzerl}
we may choose (smooth) $q$-forms $b_\ell\in\Lin\bonqom$\,, $\ell=1,\dots,d^q$\,,
with $b_\ell=\Phi_\ell+H_\ell\in\ol{\rot\pR{q-1}{}{}{\circ}(\om)}\oplus_\eps\dhqepsom{}$\,, where $\{H_\ell\}$ is a $\oplus_\eps$-ONB
of $\dhqepsom{}$\,. Then we have $E-\sum_\ell\skp{E}{\eps H_\ell}_{\lzqom}b_\ell\in\bLzqsepsom$\,. This proves
one inclusion and the other one is trivial, because the forms of $\bonqom$ are smooth and compactly supported.
Moreover, if $E\in\Lin\bonqom\cap\dhqepsom{}^{\bot_\eps}$\,, then $\eps E=\sum_\ell e_\ell\eps b_\ell\in\dhqepsom{}^{\bot}$ and thus
$$0=\skp{\eps E}{H_k}_{\lzqom}=\sum_\ell e_\ell\skp{\eps H_\ell}{H_k}_{\lzqom}=e_k\qquad,$$
which proves the directness of the sum. The other direct decomposition may be shown in a similar way.
\end{proof}

We introduce the Hilbert spaces \big(closed subspaces of $\Lzqsom$\big)
\begin{align*}
\ronqssom&:=\setb{E\in\ronqsom}{\rot(\rho^{2s}E)=0}\\
&\;=\setb{E\in\ronqsom}{\rot E=-2s\rho^{-2}RE}\qquad,\\
\dqssom&:=\setb{E\in\dqsom}{\pdiv(\rho^{2s}E)=0}\\
&\;=\setb{E\in\dqsom}{\pdiv E=-2s\rho^{-2}TE}
\end{align*}
and note the important fact
that the $\norm{\,\cdot\,}_{\rqsom}$-, $\norm{\,\cdot\,}_{\Rqsom}$- and $\norm{\,\cdot\,}_{\Lzqsom}$-norms resp.
the $\norm{\,\cdot\,}_{\dqsom}$-, $\norm{\,\cdot\,}_{\Dqsom}$- and $\norm{\,\cdot\,}_{\Lzqsom}$-norms are equivalent on
$\ronqssom$ resp. $\dqssom$\,.
Here we used the operators $R,T$ from \cite[Definition 1]{sphharm}.
First we need an easy consequence of the projection theorem:

\begin{lem}\mylabel{lzsorthodeco}
Let $s\in\rz$\,. Then the orthogonal decompositions
\begin{align*}
\text{\bf(i)}&&\Lzqsom&=\ol{\rot\pr{q-1}{s-1}{}{\circ}(\om)}\oplus_{s,\eps}\eps^\me\dqssom=\eps^\me\ol{\rot\pr{q-1}{s-1}{}{\circ}(\om)}\oplus_{s,\eps}\dqssom\qquad,\\
\text{\bf(ii)}&&\Lzqsom&=\ol{\pdiv\pdi{q+1}{s-1}{}{}(\om)}\oplus_{s,\eps}\eps^\me\ronqssom=\eps^\me\ol{\pdiv\pdi{q+1}{s-1}{}{}(\om)}\oplus_{s,\eps}\ronqssom
\end{align*}
hold with continuous projections. Here we denote by $\oplus_{s,\eps}$ the orthogonal sum
with respect to the $\skp{\eps\rho^{2s}\,\cdot\,}{\,\cdot\,}_{\lzqom}$-scalar product and the closures
are taken in $\Lzqsom$\,.
The space $\ol{\rot\pr{q-1}{s-1}{}{\circ}(\om)}$ resp.
$\ol{\pdiv\pdi{q+1}{s-1}{}{}(\om)}$ may be replaced by $\ol{\rot\qcom{\infty}{q-1}{\circ}}$ resp. $\ol{\pdiv\pdi{q+1}{{\vox}}{}{}(\om)}$\,.
\end{lem}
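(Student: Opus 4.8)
We must establish, for every $s\in\rz$, the two $\skp{\eps\rho^{2s}\,\cdot\,}{\,\cdot\,}_{\lzqom}$-orthogonal decompositions of $\Lzqsom$ into the closure of the range of $\rot$ (resp. $\pdiv$) and the weighted-orthogonal, $\eps$-scaled kernel $\dqssom$ (resp. $\ronqssom$), with continuous projections and with the indicated smooth-core replacements.

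Let me sketch a proof.

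\medskip

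The plan is to run the classical Hodge–Helmholtz argument, but relative to the \emph{weighted} inner product $\skp{\Phi}{\Psi}_{s,\eps}:=\skp{\eps\rho^{2s}\Phi}{\Psi}_{\lzqom}$ rather than the plain $\lzqom$-product. First I would observe that, since $\eps$ is $\tau$-admissible (hence uniformly bounded, symmetric and positive definite) and $\rho^{2s}$ is bounded above and below on each fixed annulus and comparable to the weight defining $\Lzqsom$, the bilinear form $\skp{\,\cdot\,}{\,\cdot\,}_{s,\eps}$ is an inner product on $\Lzqsom$ equivalent to its natural norm; thus $\big(\Lzqsom,\skp{\,\cdot\,}{\,\cdot\,}_{s,\eps}\big)$ is a Hilbert space. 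The projection theorem then yields $\Lzqsom=V\oplus_{s,\eps}V^{\bot_{s,\eps}}$ for $V:=\ol{\rot\pr{q-1}{s-1}{}{\circ}(\om)}$, the closure being taken in $\Lzqsom$. Everything therefore reduces to identifying the $\skp{\,\cdot\,}{\,\cdot\,}_{s,\eps}$-orthogonal complement of $V$ with $\eps^\me\dqssom$.

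\medskip

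For that identification I would compute the complement by duality with the formal adjoint of $\rot$. For $E\in\Lzqsom$ the condition $E\in V^{\bot_{s,\eps}}$ means $\skp{\eps\rho^{2s}E}{\rot\Phi}_{\lzqom}=0$ for all $\Phi\in\pr{q-1}{s-1}{}{\circ}(\om)$, in particular for all $\Phi\in\cqmeunom$. By the formal skew-adjointness of $\rot$ and $\pdiv$ on compactly supported smooth forms this says exactly that $\eps\rho^{2s}E$ has a weak divergence and $\pdiv(\rho^{2s}\eps E)=0$. Writing $H:=\eps E$, this is precisely $H\in\dqsom$ together with $\pdiv(\rho^{2s}H)=0$, i.e. $H\in\dqssom$; hence $E=\eps^\me H\in\eps^\me\dqssom$, and conversely any such $E$ is $\skp{\,\cdot\,}{\,\cdot\,}_{s,\eps}$-orthogonal to $\rot\cqmeunom$ and, by density in the graph norm, to all of $\rot\pr{q-1}{s-1}{}{\circ}(\om)$. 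The defining density $\pr{q-1}{s-1}{}{\circ}(\om)=\ol{\cqmeunom}$ (closure in the graph norm of $\rqme{s-1}$) also gives at once the asserted replacement of $\ol{\rot\pr{q-1}{s-1}{}{\circ}(\om)}$ by $\ol{\rot\qcom{\infty}{q-1}{\circ}}$, since the two ranges have the same $\Lzqsom$-closure. This proves {\bf(i)} in the first listed form; the second form (with $\eps$ moved onto the other summand) follows from the symmetry of $\eps$ by testing against $\eps^\me$-scaled complements, and {\bf(ii)} is obtained verbatim by the dual argument, interchanging the roles of $\rot$ and $\pdiv$ (equivalently, applying the Hodge star as in Remark \ref{endrem}), now using $\pdi{q+1}{s-1}{}{}(\om)=\ol{\pdi{q+1}{{\vox}}{}{}(\om)}$.

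\medskip

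The continuity of the projections is immediate from the projection theorem applied in the norm $\skp{\,\cdot\,}{\,\cdot\,}_{s,\eps}$ together with its equivalence to $\norm{\,\cdot\,}_{\Lzqsom}$, so no separate estimate is needed. The genuinely delicate point, and the one I would treat most carefully, is the characterization of $V^{\bot_{s,\eps}}$: the integration-by-parts identity identifying orthogonality with the \emph{weighted} divergence-free condition must be justified weakly (via test forms in $\cqmeunom$ and the weak definitions of $\rot,\pdiv$ already set up in the preliminaries), and one must check that multiplication by $\rho^{2s}$ indeed maps $\dqssom$-type forms within the correct weighted class so that $\pdiv(\rho^{2s}H)\in\qLzom{q-1}{1-s}$ makes sense — this is exactly where the product-rule computation $\pdiv(\rho^{2s}H)=\rho^{2s}\pdiv H+2s\rho^{2s-2}TH$ (and the resulting equivalence of the three norms on $\dqssom$ recorded just before the lemma) is used, and it is what makes the two formulations of $\dqssom$ coincide.
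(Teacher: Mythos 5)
Your proposal is correct and follows essentially the same route as the paper: the projection theorem in the $\skp{\eps\rho^{2s}\,\cdot\,}{\,\cdot\,}_{\lzqom}$-inner product, identification of the orthogonal complement of $\ol{\rot\qcom{\infty}{q-1}{\circ}}$ as $\setb{E}{\rho^{2s}\eps E\in\dqnom{-s}}=\eps^\me\dqssom$ via the product rule $\pdiv(\rho^{2s}\eps E)=\rho^{2s}\pdiv\eps E+2s\rho^{2s-2}T\eps E$, and the density of $\cqmeunom$ in $\pr{q-1}{s-1}{}{\circ}(\om)$ for the smooth-core replacement. No gaps.
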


\begin{proof}
Since $\qcom{\infty}{q-1}{\circ}$ is dense in $\pr{q-1}{s-1}{}{\circ}(\om)$ we have
$E\in\Lzqsom\cap\big(\rot\pr{q-1}{s-1}{}{\circ}(\om)\big)^{\bot_{s,\eps}}$\,, if and only if
$E\in\Lzqsom\cap\big(\rot\qcom{\infty}{q-1}{\circ}\big)^{\bot_{s,\eps}}$\,, which means $\rho^{2s}\eps E\in\dqnom{-s}$\,.
Thus $E\in\dqssom$\,, because $0=\pdiv(\rho^{2s}\eps E)=\rho^{2s}\pdiv\eps E+2s\rho^{2s-2}T\eps E$\,.
This shows (i) and (ii) follows analogously.
\end{proof}

\begin{rem}\mylabel{lzsorthodecorem}
Clearly this lemma holds for $0$-admissible transformations $\eps$ as well.
\end{rem}

We need two important results, which may be formulated as follows:
Defining the Hilbert space
$$\sideset{_\eps}{^q_t}\X(\om):=\big(\ronqtom\cap\eps^\me\dqtom\big)\boxplus\eta\towerH{q}{t}\boxplus\eta\towerHh{q}{t}\qqtext{,}t\in\rz\qquad,$$
we have

\begin{lem}\mylabel{rotdivmap}
Let $s\in\rz\ohne\tilde{\pI}$ and $q\neq0$ or $q=0$ and $s>-N/2$\,. Then
$$\Abb{\sideset{_\eps}{^q_{s}}\ROT}{\sideset{_\eps}{^q_{s}}\X(\om)\cap\,\eps^\me\dqlocnom}{\bRom{q+1}{s+1}{0}}{E}{\rot E}\qquad,$$
$$\Abb{\sideset{_\eps}{^q_{s}}\DIV}{\sideset{_\eps}{^q_{s}}\X(\om)\cap\,\ronqlocnom}{\bDom{q-1}{s+1}{0}}{H}{\pdiv\eps H}$$
are continuous and surjective Fredholm operators with kernels
$$N(\sideset{_\eps}{^q_{s}}\ROT)=N(\sideset{_\eps}{^q_{s}}\DIV)=\begin{cases}\dhqepsom{s}&\text{ , if }s<-N/2\\\dhqepsom{}&\text{ , if }s>-N/2\end{cases}\qquad.$$
\end{lem}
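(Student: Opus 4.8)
The plan is to prove the assertions for the first operator $\sideset{_\eps}{^q_{s}}\ROT$ and to transfer them to $\sideset{_\eps}{^q_{s}}\DIV$ by the Hodge star duality of Remark \ref{endrem}, which interchanges $\rot$ and $\pdiv$, the transformations, and the rank $q$ with $N-q$; so I only discuss $\sideset{_\eps}{^q_{s}}\ROT$. First I would check that it is well defined and continuous. Writing a domain element as $E=E_0+\eta H+\eta\check{H}$ with $E_0\in\ronqsom\cap\eps^\me\dqsom$\,, $H\in\towerH{q}{s}$ and $\check{H}\in\towerHh{q}{s}$\,, the tower equations together with \eqref{DRlindep} show that the harmonic and exceptional tower forms $H,\check{H}$ are both $\rot$- and $\pdiv$-free, so that $\rot E=\rot E_0+(\rot\eta)\wedge H+(\rot\eta)\wedge\check{H}$\,, where the two commutator terms are supported in $\supp\nabla\eta\subset\ol{A_{r_1}\cap U_{r_2}}$ and hence belong to every weighted space. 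Thus $\rot E$ is a $\rot$-free $(q+1)$-form of weight $s+1$ inheriting the tangential boundary condition, and the only point needing care is orthogonality to the Dirichlet forms: for a Dirichlet form $K$ of rank $q+1$ the skew adjointness of $\rot$ and $\pdiv$ yields $\skp{\rot E}{\eps K}_{\lzqpeom}=-\skp{E}{\pdiv\eps K}_{\lzqom}=0$\,, the partial integration being legitimate because $E$ satisfies the boundary condition and the general assumptions on $s$ and $\tau$ render the weights of $E$ and $\eps K$ complementary. Hence $\rot E\in\bRom{q+1}{s+1}{0}$\,, and continuity follows from the boundedness of $\rot$ and the norm equivalences noted before Lemma \ref{lzsorthodeco}.

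Next I would treat the kernel and the Fredholm property. Since the domain of $\sideset{_\eps}{^q_{s}}\ROT$ already enforces $\pdiv\eps E=0$\,, its kernel consists of those $E\in\sideset{_\eps}{^q_{s}}\X(\om)$ that are simultaneously $\rot$-free and $\eps$-co-closed and satisfy the boundary condition, i.e. of the $\eps$-Dirichlet forms contained in $\X$\,. For $s<-N/2$ all tower spaces are trivial, $\X$ reduces to $\ronqsom\cap\eps^\me\dqsom$\,, and this kernel is exactly $\dhqepsom{s}$\,. For $s>-N/2$ I would invoke the integrability characterization \paulystaticintdirichlet, which collapses the weighted Dirichlet spaces over $[-N/2,N/2-1)$ to $\dhqepsom{}$\,, together with the fact that $\dhqepsom{}\subset\X$ (built into the definition of $\X$ via the asymptotics of Dirichlet forms), to identify the kernel as $\dhqepsom{}$\,; in particular it is finite dimensional by the {\sf MLCP}. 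Closedness of the range and finiteness of the cokernel I would obtain from a weighted a priori estimate of Poincar\'e--Friedrichs type in which the lower order term, controlled in a space $\qLzom{q}{t}$ with $t<s$\,, is compact by the {\sf MLCP} ({\paulytimeharmmlcpdef}); this already makes the operator semi-Fredholm.

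The main work, and the step I expect to be the genuine obstacle, is surjectivity. Given $F\in\bRom{q+1}{s+1}{0}$ I would construct a potential $E$ with $\rot E=F$ by gluing an exterior and an interior solution with the cut-off $\eta$\,. In the exterior region $A_{r_0}$\,, where $\eps$ is $\pc{1}$ and asymptotically the identity, I would solve $\rot E_\infty=F$ by the explicit spherical-harmonics and tower-form calculus underlying the static theory: expanding $F$ into radial-angular modes and integrating radially produces a potential whose slowly decaying modes are precisely negative harmonic tower forms, which is exactly what forces the correction terms $\eta\towerH{q}{s}$ and, in the borderline ranks, the exceptional forms $\eta\towerHh{q}{s}$ that appear in $\X$\,. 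It is here that the hypothesis $s\in\rz\ohne\tilde{\pI}$ is indispensable: keeping $s+1$ off $\pI$ means that no indicial exponent of the radial operator meets the prescribed weight, so the term-by-term radial inversion stays bounded. On the bounded part I would solve the near-field problem with the static solution operators (\paulystaticrotdivtheospez, \paulystaticLloes). Gluing leaves a compactly supported error $(\rot\eta)\wedge(\cdots)\in\bRom{q+1}{s+1}{0}$\,, which the semi-Fredholm framework absorbs: since $F$ was taken $\eps$-orthogonal to the Dirichlet forms --- precisely the obstruction dual to the kernel --- the Fredholm alternative yields the desired preimage. Finally the domain constraint $\pdiv\eps E=0$ is arranged by subtracting a suitable $\rot$-free form, whose existence is furnished by the weighted Helmholtz decomposition of Lemma \ref{lzsorthodeco}, without altering $\rot E$\,. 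Surjectivity together with the finite dimensional kernel then gives the stated Fredholm operators.
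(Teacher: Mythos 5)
Your overall architecture for surjectivity (explicit radial/spherical-harmonics analysis near infinity, static solution operators near the boundary, gluing with $\eta$) is the right one and close in spirit to the paper's, but two steps are genuinely flawed. First, the reduction of $\sideset{_\eps}{^q_{s}}\DIV$ to $\sideset{_\eps}{^q_{s}}\ROT$ by Hodge duality does not work: the star operator exchanges the tangential and the normal boundary condition (this is exactly the point of Remark \ref{endrem}), whereas both operators in the lemma carry the \emph{tangential} condition in their domains, since $\sideset{_\eps}{^q_{s}}\X(\om)$ is built from $\ronqsom\cap\eps^\me\dqsom$\,. The Hodge dual of $\sideset{_\eps}{^q_{s}}\DIV$ is therefore a rotation operator acting on forms with the normal condition, i.e.\ a different statement. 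The paper proves $\sideset{_\eps}{^q_{s}}\DIV$ in detail and notes that $\sideset{_\eps}{^q_{s}}\ROT$ follows \emph{analogously} --- a parallel argument, not a dual one --- so you would have to carry out both.

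Second, and more seriously, your surjectivity argument is circular where you invoke the Fredholm alternative. To absorb the gluing error in this way you must already know that the annihilator of the range inside $\Lzqpeom{-s-1}$ reduces to exactly the orthogonality conditions built into the definition of $\bRom{q+1}{s+1}{0}$\,, i.e.\ to the unweighted Dirichlet forms; but in the only case requiring new work, $s<-N/2$ (for $s>-N/2$ the lemma is simply quoted from {\paulystaticrotdivcorlem}), the dual weight $-s-1$ is large and the spaces of weighted Dirichlet forms differ from the unweighted ones by Corollary \ref{weighteddirichletformscor}, so identifying the cokernel is equivalent to the surjectivity claim itself and cannot be extracted from a semi-Fredholm estimate for free. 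Note also that for $s<-N/2$ all tower spaces vanish, so the tower corrections you emphasize are not where the difficulty lies. The paper instead argues constructively: extend $F$ by zero to $\rN$\,, split it with the whole-space decomposition of \cite[Theorem 4]{sphharm}, solve the divergence equation for one part with the operator $B$ of \cite[Theorem 7]{sphharm} plus a regularity result, expand the remainder in spherical harmonics on $A_{r_0}$\,, and make the ansatz $H=\eta h+\sum_If_I\eta R^q_{{}_1I}+\Phi$\,; the decisive observation is that the corrected right-hand sides for $\Phi$ lie in the \emph{unweighted} spaces, so that {\paulydisstopiso} yields $\Phi$ of weight $-1$\,, which for $s<-N/2<-3/2$ is automatically of weight $s$\,. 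Your sketch contains the raw ingredients of this construction but delegates its decisive step to an unproved identification of kernel and cokernel.
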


\begin{rem}\mylabel{rotdivmaprem}
By adding suitable Dirichlet forms from $\dhqepsom{}$ we always may obtain $H\bot\bqom$\,, $q\neq1$\,, and $\eps E\bot\bonqom$ or
$\eps H,\eps E\bot\dhqepsom{}$\,, if $s>-N/2$\,.
Then for $s>-N/2$ the operators
$$\sideset{_\eps}{^q_{s}}\ROT:\sideset{_\eps}{^q_{s}}\X(\om)\cap\eps^\me\bDom{q}{\loc}{0}\To\bRom{q+1}{s+1}{0}\qquad,$$
$$\sideset{_\eps}{^q_{s}}\DIV:\sideset{_\eps}{^q_{s}}\X(\om)\cap\bRom{q}{\loc}{0}\To\bDom{q-1}{s+1}{0}$$
are also injective and hence topological isomorphisms by the bounded inverse theorem.
Furthermore, we note \big(using the notations from \cite{paulystatic}\big)
$$\towerH{q}{t}=\calD^q(\pcIbq{0}{t})=\calR^q(\pcJbq{0}{t})\qqtext{,}\towerHh{q}{t}=\check{\calD}^{q,1}_{t}=\check{\calR}^{q,1}_{t}$$
and for $t<N/2$
$$\towerH{q}{t}=\towerHh{q}{t-1}=\{0\}\qquad.$$
\end{rem}

\begin{proof}
This lemma has been proved in {\paulystaticrotdivcorlem} for $s>-N/2$\,.
Hence we only have to discuss the cases of small weights
$s<-N/2$ and ranks of forms $1\leq q\leq N$\,.
Welldefinedness, continuity and the assertions about the kernels are trivial in these cases.
To show surjectivity for $\sideset{_\eps}{^q_{s}}\DIV$ let us pick some $F\in\bDom{q-1}{s+1}{0}$\,.
Following the proofs of {\paulystaticproofdiv} and using \cite[Theorem 4]{sphharm}
we represent the extension by zero of $F$ to $\rN$ by
$$\hat{F}=:F_\textrm{D}+F_\textrm{R}\in\pdi{q-1}{s+1}{0}{}(\rN)+\pr{q-1}{s+1}{0}{}(\rN)\qquad.$$
Now $F_\textrm{D}$ is contained in the range of the operator $B$ from \cite[Theorem 7]{sphharm} and thus we get some
$$h\in\pdq{s}(\rN)\cap\rqn{s}(\rN)\qqtext{solving}\pdiv h=F_\textrm{D}\qquad.$$
Applying the regularity result {\paulydissregtheoaussenzwei} we even have $h\in\qh{1}{q}{s}{}(\rN)$\,.
Since $F_\textrm{R}$ is an element of $\pdi{q-1}{s+1}{0}{}(\om)\cap\pr{q-1}{s+1}{0}{}(\om)$
we may represent this form in terms of a spherical harmonics expansion
$$\restr{F_\textrm{R}}{A_{r_0}}=\sum_{I\in\pcIb{q-1}{0}{}}f_ID^{q-1}_I+f\hat{D}^{q-1,1}
+\sum_{I\in\pcIpqmens}f_ID^{q-1}_I$$
with uniquely determined $f_I,f\in\cz$ using {\paulystaticsphdeco}, where
$$\pcIpqmens:=\setb{I\in\pcI{q-1}{0}{}}{\s(I)=+\,\wedge\,\ei(I)<-s-1-N/2}\qquad.$$
Now looking at {\paulystaticremint} the first term of the sum on the right hand side belongs to
$\qLz{q-1}{<\Nh}(A_{r_0})$\,, the second to $\qLz{q-1}{<\Nh-1}(A_{r_0})$
and the third to $\qLz{q-1}{s+1}(A_{r_0})$\,. Therefore,
$$F_\textrm{R}-\sum_{I\in\pcIpqmens}f_I\eta D^{q-1}_I\in\qLz{q-1}{}(\rN)\qquad.$$
This suggests the ansatz
$$H:=\eta h+\sum_{I\in\pcIpqmens}f_I\eta R^{q}_{{}_1I}+\Phi$$
to solve $H\in\ronqnom{s}\cap\eps^\me\dqom{s}$ and $\pdiv\eps H=F$\,.
Thus we are searching for some $q$-form $\Phi$ in $\ronqom{s}\cap\eps^\me\dqom{s}$ satisfying
\begin{align}\begin{split}
\rot\Phi&=-\rot(\eta h)=-C_{\rot,\eta}h=:\tilde{G}\qquad,\\
\pdiv\eps\Phi&=F-\pdiv(\eta\eps h)-\sum_{I\in\pcIpqmens}f_I\pdiv(\eta\eps R^{q}_{{}_1I})=:\tilde{F}\qquad,
\end{split}\mylabel{transsys}\end{align}
since $\rot(\eta R^{q}_{{}_1I})=0$\,. Clearly we have $\tilde{G}\in\bRom{q+1}{\vox}{0}\subset\bRom{q+1}{}{0}$\,.
Moreover, not only $\tilde{F}$ is an element of $\bDom{q-1}{s+1}{0}$\,, but also $\tilde{F}\in\bDom{q-1}{}{0}$ holds, because
\begin{align*}
\tilde{F}&=\pdiv(1-\eta)h-\sum_{I\in\pcIpqmens}f_IC_{\pdiv,\eta}R^{q}_{{}_1I}
+F_\textrm{R}-\sum_{I\in\pcIpqmens}f_I\eta D^{q-1}_I\\
&\qquad-\pdiv\big(\epsd\eta(h+\sum_{I\in\pcIpqmens}f_I\eta R^{q}_{{}_1I})\big)\qquad,
\end{align*}
where the first two terms of the sum on the right hand side lie in $\qLz{q-1}{\vox}(\om)$\,,
the sum of the third and fourth terms in $\qLz{q-1}{}(\om)$
and the last one in $\qLz{q-1}{s+1+\tau}(\om)\subset\qLz{q-1}{}(\om)$\,, since
$$\eta(h+\sum_{I\in\pcIpqmens}f_I\eta R^{q}_{{}_1I})\in\qh{1}{q}{s}{}(\rN)$$
and $\tau\geq-s-1$\,. Now we are able to apply {\paulydisstopiso}.
Doing this we get some $\Phi\in\ronqom{-1}\cap\eps^\me\dqom{-1}$
solving the system \eqref{transsys}. Since $s<-N/2<-3/2$ we have $\Phi\in\ronqom{s}\cap\eps^\me\dqom{s}$\,,
which completes the proof. The assertion about $\sideset{_\eps}{^q_{s}}\ROT$ follows analogously.
\end{proof}

\begin{proof}{\bf of Theorem \ref{decorotdivtheosmall} } 
Apply Lemma \ref{rotdivmap} and Remark \ref{rotdivmaprem} with the modified values of $q$\,, $s$ and $\eps$\,.
\end{proof}

\begin{proof}{\bf of Theorem \ref{decorotdivtheolarge} }
The first equations in (i)-(iv) have already been proved in {\paulystaticLloes}. Let
$$G\in\bRqsom=\rot\Big(\big(\ronqme{s-1}(\om)\boxplus\eta\towerH{q-1}{s-1}\big)\cap\nu^\me\bDom{q-1}{<\Nh}{0}\Big)\qquad,$$
i.e.
$$G=\rot G_{s-1}+\sum_{I\in\pcIb{q-1}{0}{s-1}}g_I\rot\eta D^{q-1}_I$$
with $G_{s-1}\in\ronqme{s-1}(\om)\cap\nu^\me\dqme{s-1}(\om)\cap\pb{q-1}{\circ}(\om)^{\bot_\nu}$ and $g_I\in\cz$\,. Since
\begin{align*}
\rot\eta D^{q-1}_I&=\rot\Delta\eta D^{q-1}_{{}_2I}-\rot C_{\pdiv\rot,\eta}D^{q-1}_{{}_2I}
\intertext{and clearly}
\rot\Delta\eta R^{q-1}_{{}_2I}&=\rot\pdiv C_{\rot,\eta}R^{q-1}_{{}_2I}
\intertext{we obtain}
\ie\alpha^{q'}_{\ei(I)}\rot\eta D^{q-1}_I&=\rot\Delta\eta P^{q-1}_{\ei(I),\ci(I)}-\ie\alpha^{q'}_{\ei(I)}\rot C_{\pdiv\rot,\eta}D^{q-1}_{{}_2I}
-\alpha^q_{\ei(I)}\rot\pdiv C_{\rot,\eta}R^{q-1}_{{}_2I}\qquad.
\end{align*}
Now $\Delta\eta P^{q-1}_{\ei(I),\ci(I)}=C_{\Delta,\eta}P^{q-1}_{\ei(I),\ci(I)}$ has compact support and therefore
$$G\in\rot\big(\ronqme{s-1}(\om)\cap\nu^\me\dqme{s-1}(\om)\cap\pb{q-1}{\circ}(\om)^{\bot_\nu}\big)\qquad,$$
which proves (i). (ii) is shown analogously.
The last equation in (iii) follows from (i),
since we can split off a term
$$\eta\turmd{q-1}{1}{\sigma}{m}{-}=\nu^\me\nu\eta\turmd{q-1}{1}{\sigma}{m}{-}=\nu^\me\eta\turmd{q-1}{1}{\sigma}{m}{-}+\nu^\me\hat{\nu}\eta\turmd{q-1}{1}{\sigma}{m}{-}$$
and the tower forms are smooth and $\hat{\nu}$ decays
as well as $\pdiv\eta\turmd{q-1}{1}{\sigma}{m}{-}=0$ holds by Remark \ref{decorotdivremlargetower}.
Finally the last equation in (iv) is a direct consequence of (ii)
and a similar argument like the latter one.
\end{proof}

\begin{proof}{\bf of Theorem \ref{decorotdivtheoexcept} }
Lemma \ref{rotdivmap} yields
$$\qLzom{N}{s}=\rot\Big(\big(\pr{N-1}{s-1}{}{\circ}(\om)\boxplus\eta\towerH{N-1}{s-1}\boxplus\eta\towerHh{N-1}{}\big)\cap\nu^\me\bDom{N-1}{<\Nh-1}{0}\Big)$$
and the same arguments used in the latter proof show
$$\rot\Big(\big(\pr{N-1}{s-1}{}{\circ}(\om)\boxplus\eta\towerH{N-1}{s-1}\big)\cap\nu^\me\bDom{N-1}{<\Nh-1}{0}\Big)
=\rot\big(\pr{N-1}{s-1}{}{\circ}(\om)\cap\nu^\me\pdi{N-1}{s-1}{}{}(\om)\cap\pb{N-1}{\circ}(\om)^{\bot_\nu}\big)\,.$$
Because $\pdiv\eta\check{H}^{N-1}=0$ we have
$$\qLzom{N}{s}=\rot\Big(\big(\pr{N-1}{s-1}{}{\circ}(\om)\cap\nu^\me\pdi{N-1}{s-1}{}{}(\om)\cap\pb{N-1}{\circ}(\om)^\bot\big)\boxplus\eta\towerHh{N-1}{}\Big)\qquad.$$
Now $\pdiv\check{P}^N=\check{H}^{N-1}$ and thus
$\rot\eta\check{H}^{N-1}=\Delta\eta\check{P}^N-\rot C_{\pdiv,\eta}\check{P}^N$\,,
which proves
$$\qLzom{N}{s}=\rot\big(\pr{N-1}{s-1}{}{\circ}(\om)\cap\nu^\me\pdi{N-1}{s-1}{}{}(\om)\cap\pb{N-1}{\circ}(\om)^{\bot_\nu}\big)\dotplus\Delta\eta\towerPh{N}{}\qquad.$$
Finally we have to show that the sum is direct. To do this, let $F=f\Delta\eta\check{P}^N=\rot E$
with some $f\in\cz$ and $E\in\pr{N-1}{s-1}{}{\circ}(\om)$\,.
We want to use the notations from \cite{sphharm}, i.e. the forms $P^{N,4}_{0,1}$ and $Q^{N,4}_{0,1}$\,, as well.
By definition there exists a constant $c\neq0$\,,
such that $\check{P}^N=c\turmr{N}{2}{0}{1}{-}=\frac{c}{2-N}Q^{N,4}_{0,1}$
using {\paulystaticremsph}.
Since $s>N/2$ and $P^{N,4}_{0,1}\in\qLzom{N}{<-\Nh}$ partial integration yields
$$\skp{\rot E}{P^{N,4}_{0,1}}_{\lzqom}=0\qquad,$$
because $P^{N,4}_{0,1}\in\Lin\{*\Eins\}$ is constant.
But on the other hand we obtain
$$\skp{F}{P^{N,4}_{0,1}}_{\lzqom}=f\skp{\Delta\eta\check{P}^N}{P^{N,4}_{0,1}}_{\lzqom}
=\frac{c\,f}{2-N}\skp{C_{\Delta,\eta}Q^{N,4}_{0,1}}{P^{N,4}_{0,1}}_{\lzqom}=c\,f$$
by \cite[(73)]{sphharm}, i.e. $f=0$\,.
\end{proof}

Now we turn to the main idea of our decompositions and the

\vspace*{2mm}
\begin{proof}{\bf of Theorem \ref{decotheo} }
Let $1\leq q\leq N-1$ and $s$ be as in section \ref{results} as well as $F\in\Lzqsom$\,.
Using Lemma \ref{lzsorthodeco} we decompose $F=F_r+\eps^\me\hat{F}_d$
with $F_r\in\ol{\rot\qcom{\infty}{q-1}{\circ}}$ and $\hat{F}_d\in\dqssom$\,.
A second application of this lemma yields the decomposition $\eps^\me\hat{F}_d=\eps^\me F_d+\tilde{F}$
with $F_d\in\ol{\pdiv\dqpevoxom}$ and $\tilde{F}\in\ronqsom\cap\eps^\me\dqsom$\,.
Furthermore, there exists a constant $c>0$ independent of $F$\,, such that
$$\norm{F_r}_{\Lzqsom}+\norm{F_d}_{\Lzqsom}+\norm{\tilde{F}}_{\rqsom\cap\eps^\me\dqsom}
\leq c\,\norm{F}_{\Lzqsom}\qquad.$$
Now $\tilde{F}$ is more regular than $F$ and this enables us to solve
$$\pdiv\eps H=\pdiv\eps\tilde{F}\in\bDom{q-1}{s+1}{0}\qqtext{,}\rot E=\rot\tilde{F}\in\bRom{q+1}{s+1}{0}$$
with some $H\in\sideset{_\eps}{^q_{s}}\X(\om)\cap\,\ronqlocnom$
and $E\in\sideset{_\eps}{^q_{s}}\X(\om)\cap\,\eps^\me\dqlocnom$ by Lemma \ref{rotdivmap}.
We note $E,H\in\Lzqtom$ for all $t$ with
$t\leq s$ and $t<N/2-1$\,. Then $\hat{F}:=\tilde{F}-E-H\in\dhqepsom{t}$ and
\beq F=F_r+H+\eps^\me(F_d+\eps E)+\hat{F}\qquad,\mylabel{Frepone}\eeq
where $F_r+H\in\ronqtnom$ and $F_d+\eps E\in\dqtnom$\,.

For $s>-N/2$ we may refine this representation of $F$\,.
In fact for these values of $s$ we have $\hat{F}\in\dhqepsom{>-\Nh}=\dhqepsom{}$
by {\paulystaticintdirichlet}. Using Remark \ref{rotdivmaprem} or {\paulystaticrotdivtheospez} additionally we may obtain
$\eps E\bot\bonqom$ and $H\bot\bqom$\,, if $q\neq1$\,, or $\eps E\bot\dhqepsom{}$ and $\eps H\bot\dhqepsom{}$\,, if $s>1-N/2$\,.
Therefore, $F_d+\eps E\in\bDom{q}{t}{0}$ holds for $s>-N/2$ and $F_r+H\in\bRom{q}{t}{0}$ for $s>1-N/2$ or $1-N/2>s>-N/2$ and $q\neq1$\,.
Moreover, {\paulystaticrotdivtheospez} yields not only $E,H\in\sideset{_\eps}{^q_{s}}\X(\om)$ but also
\begin{align*}
E&\in\big(\ronqsom\cap\eps^\me\dqsom\big)\boxplus\eta\calD^q(\pcIbq{0}{s})&&\text{, if }q\neq N-1,\\
H&\in\big(\ronqsom\cap\eps^\me\dqsom\big)\boxplus\eta\calR^q(\pcJbq{0}{s})&&\text{, if }q\neq 1,
\end{align*}
i.e. the exceptional forms do not appear in these cases.

The stronger assumption $F\in\bLzqsepsom$ for $s>1-N/2$ or $s>-N/2$ and $2\leq q\leq N-2$ implies $\hat{F}\in\dhqepsom{}^{\bot_\eps}$ and
thus $\hat{F}=0$\,. Hence in these cases \eqref{Frepone} turns to
\beq F=F_r+H+\eps^\me(F_d+\eps E)\qquad.\mylabel{Freptwo}\eeq
Until now we have shown the assertions of Theorem \ref{decotheo} (i), (ii) and also (iii) for weights $s<N/2-1$\,.

Considering larger weights $s>N/2-1$ and $F\in\bLzqsepsom$ the tower forms occur in the representation \eqref{Freptwo}.
More precisely we have
\begin{align*}
E&=E_s+\sum_{I\in\pcIbq{0}{s}}e_I\eta D^q_I+e\eta\begin{cases}\turmd{N-1}{1}{0}{1}{-}&\text{, }q=N-1\\0&\text{, otherwise}\end{cases}\qquad,\\
H&=H_s+\sum_{J\in\pcJbq{0}{s}}h_J\eta R^q_J+h\eta\begin{cases}\turmr{1}{1}{0}{1}{-}&\text{, }q=1\\0&\text{, otherwise}\end{cases}
\end{align*}
with uniquely determined forms $E_s,H_s\in\ronqsom\cap\eps^\me\dqsom$ and complex coefficients $e_I,e,h_J,h$\,.
We note $\pcIbq{0}{s}=\pcJbq{0}{s}$\,.
For $s<N/2$ we have $\pcIbq{0}{s}=\emptyset$ and
for $s>N/2$ we see $\alpha^q_{\ei(I)}\eta R^q_I=-\ie\alpha^{q'}_{\ei(I)}\eta D^q_I\in\Lzqom{<\Nh}$
for all $I\in\pcIbq{0}{s}$ by \eqref{DRlindep}.
Thus we obtain
\begin{align*}
F&=F_r+H_s+\eps^\me F_d+E_s\\
&\qquad\qquad+\sum_{I\in\pcIbq{0}{s}}(h_I-\tilde{e}_I)\eta R^q_I
+\eta\begin{cases}h\turmr{1}{1}{0}{1}{-}&\text{, }q=1\\e\turmd{N-1}{1}{0}{1}{-}&\text{, }q=N-1\\0&\text{, otherwise}\end{cases}\qquad,
\end{align*}
where $\tilde{e}_I:=-\ie e_I\alpha^q_{\ei(I)}/\alpha^{q'}_{\ei(I)}$\,.
Looking, for example, at $q=1$ we find $\eta\turmr{1}{1}{0}{1}{-}\notin\Lzqom{\geq\Nh-1}$\,.
Then for integrability reasons we get $h=0$\,, such that the exceptional tower form does not appear.
Clearly also $e=0$ holds true for $q=N-1$\,.
Moreover, $h_I=\tilde{e}_I$ since $R^q_I$ are linear independent and $\eta R^q_I\notin\Lzqom{s}$ for all $I\in\pcIbq{0}{s}$ and $s>N/2$\,.

By the smoothness of $\eta D^q_I$ as well as the decay and differentiability properties of $\epsd$ we obtain furthermore
$\epsd\eta D^q_I\in\qhom{1}{q}{s}{}$\,. Thus for all $s>1-N/2$ we get the representation
$$F=\tilde{F}_r+\sum_{I\in\pcIbq{0}{s}}h_I\eta R^q_I+\eps^\me\big(\tilde{F}_d+\sum_{I\in\pcIbq{0}{s}}e_I\eta D^q_I\big)\qquad,$$
where $\tilde{F}_r:=F_r+H_s$ and $\bds\tilde{F}_d:=F_d+\eps E_s+\sum_{I\in\pcIbq{0}{s}}e_I\epsd\eta D^q_I\eds$ as well as
\begin{align*}
\tilde{F}_r+\sum_{I\in\pcIbq{0}{s}}h_I\eta R^q_I&\in\big(\Lzqsom\boxplus\eta\towerHqs\big)\cap\bRom{q}{<\Nh}{0}\qquad,\\
\tilde{F}_d+\sum_{I\in\pcIbq{0}{s}}e_I\eta D^q_I&\in\big(\Lzqsom\boxplus\eta\towerHqs\big)\cap\bDom{q}{<\Nh}{0}
\end{align*}
with $\alpha^{q'}_{\ei(I)}h_I+\ie\alpha^q_{\ei(I)}e_I=0$ for all $I\in\pcIbq{0}{s}$\,.
This proves the remaining assertions of (iii) and the first equation in (iv).

To show the second equation in (iv) we observe
\begin{align*}
\eta D^q_I&=\eta\pdiv\rot D^q_{{}_2I}=\pdiv\rot\eta D^q_{{}_2I}-C_{\pdiv\rot,\eta}D^q_{{}_2I}\qquad,\\
\eta R^q_I&=\eta\rot\pdiv R^q_{{}_2I}=\rot\pdiv\eta R^q_{{}_2I}-C_{\rot\pdiv,\eta}R^q_{{}_2I}
\end{align*}
and therefore
$$F=\tilde{\tilde{F}}_r+\eps^\me\tilde{\tilde{F}}_d+\sum_{I\in\pcIbq{0}{s}}\frac{h_I}{\alpha^q_{\ei(I)}}\Delta_\eps\eta P^q_{\ei(I),\ci(I)}\qquad,$$
where
\begin{align*}
\tilde{\tilde{F}}_r:=\tilde{F}_r-\sum_{I\in\pcIbq{0}{s}}h_IC_{\rot\pdiv,\eta}R^q_{{}_2I}-\sum_{I\in\pcIbq{0}{s}}e_I\rot C_{\pdiv,\eta}D^q_{{}_2I}&\in\bRqsom\qquad,\\
\tilde{\tilde{F}}_d:=\tilde{F}_d-\sum_{I\in\pcIbq{0}{s}}e_IC_{\pdiv\rot,\eta}D^q_{{}_2I}-\sum_{I\in\pcIbq{0}{s}}h_I\pdiv C_{\rot,\eta}R^q_{{}_2I}&\in\bDqsom\qquad,\\
\sum_{I\in\pcIbq{0}{s}}\frac{h_I}{\alpha^q_{\ei(I)}}\Delta_\eps\eta P^q_{\ei(I),\ci(I)}&\in\Delta_\eps\eta\towerPqsmz\qquad.
\end{align*}
Clearly all sums are direct resp. orthogonal as stated. Only in the second equation of (iv) one may see this not directly.
So, for example, if
$$E=\sum_{I\in\pcIbq{0}{s}}e_I\Delta_\eps\eta P^q_{\ei(I),\ci(I)}=G+\eps^\me F\in\bRqsom\dotplus\eps^\me\bDqsom$$
with some $s>N/2$\,, then
$$H:=G-\sum_{I\in\pcIbq{0}{s}}e_I\rot\pdiv\eta P^q_{\ei(I),\ci(I)}=\sum_{I\in\pcIbq{0}{s}}e_I\eps^\me\pdiv\rot\eta P^q_{\ei(I),\ci(I)}-\eps^\me F$$
is a Dirichlet form, i.e. $H\in\dhqepsom{}$\,, but also an element of $\bonqom^{\bot_\eps}$\,. Hence $H$ must vanish and thus
$$G=\sum_{I\in\pcIbq{0}{s}}e_I\rot\pdiv\eta P^q_{\ei(I),\ci(I)}\in\Lzqsom\qquad,$$
which is only possible, if $e_I=0$ for all $I\in\pcIbq{0}{s}$\,,
since $\rot\pdiv\eta P^q_{\ei(I),\ci(I)}\notin\Lzqsom$ are linear independent.

It remains to prove the last equation of (iv). Before we start with this we observe
that by the closed graph theorem all projections in (ii)-(iv) are continuous. We note
$$\bRqsom,\eps^\me\bDqsom\subset\Lzqsom\cap\dhqepsom{-s}^{\bot_\eps}=:\bYqsom$$
and thus
$$\bXqsom:=\bRqsom\oplus_\eps\eps^\me\bDqsom\subset\bYqsom\qquad.$$
Furthermore, $\bXqsom$ and $\bYqsom$ are closed subspaces of $\Lzqsom$\,.
By the first equation of (iv) and Lemma \ref{lzdirichletzerl} we have
$$\codim\bXqsom=\dim\dhqepsom{}+\dim\Delta_\eps\eta\towerPqsmz=d^q+\sum_{0\leq\sigma<s-\Nh}\mu^q_\sigma\qquad,$$
since $\dim\Delta_\eps\eta\towerPqsmz=\dim\towerPqsmz=\dim\towerHqs$
and $s-N/2\notin\nzn$ because $s\notin\tilde{\pI}$\,.
With the identity $\codim\bYqsom=\dim\dhqepsom{-s}$ we get by Appendix A that $\bXqsom$ and $\bYqsom$
possess the same finite codimension in $\Lzqsom$\,. Consequently we obtain $\bXqsom=\bYqsom$\,.
\end{proof}

\appendix

\section{Appendix}

\subsection{Weighted Dirichlet forms}

Let $\tau>0$\,. As already mentioned in section \ref{secdef} for the space of Dirichlet forms we have
$$\dhqepsom{-\Nh}=\dhqepsom{}=\dhqepsom{<t}\qqtext{,}t:=N/2-\delta_{q,1}-\delta_{q,N-1}\qquad,$$
and its dimension equals $d^q=\beta_{q'}$\,, the $q'$th Betti number of $\Omega$\,. Furthermore, we have for all $t\in\rz$
$$\dH{0}{t}{\eps}(\om)=\{0\}\qqtext{,}
\dH{N}{t}{\eps}(\om)=\begin{cases}\{0\}&, t\geq-N/2\\\eps^\me\Lin\{*\Eins\}&, t<-N/2\end{cases}\qquad.$$
(This holds even for $\tau=0$\,.)
We repeat some notations and results from \cite{paulydiss}, \cite{paulystatic} and \cite{sphharm}.
Let us introduce the `special growing Dirichlet forms' $\Esm$ from {\paulydissEsmdef} or \cite{paulyasym} as the unique solutions of the problems
$$\Esm\in\dhqepsom{<-\Nh-\sigma}\cap\bonqom^{\bot_\eps}\qqtext{,}\Esm-\turmd{q}{0}{\sigma}{m}{+}\in\Lzqom{>-\Nh}\qquad,$$
where $\sigma\in\nzn$ and $1\leq m\leq\mu^q_\sigma$ with
$$\mu^q_\sigma=\binom{N}{q}\binom{N-1+\sigma}{\sigma}\frac{qq'(N+2\sigma)}{N(q+\sigma)(q'+\sigma)}$$
from \cite[Theorem 1 (iii)]{sphharm}. To guarantee their existence we have to impose the decay conditions
$\tau>\sigma$ and $\tau\geq N/2-1$\,. We note $\mu^q_0=\binom{N}{q}$ and thus $\mu^0_0=\mu^N_0=1$\,.
Moreover, $\turmd{0}{0}{0}{1}{+}$ resp. $\turmd{N}{0}{0}{1}{+}$ is a multiple of $\Eins$ resp. $*\Eins$\,.

\begin{lem}\mylabel{weighteddirichletforms}
Let $1\leq q\leq N-1$ and $s\in(-\infty,-N/2)\ohne\tilde{\pI}$ as well as $\tau>-s-N/2$ and $\tau\geq N/2-1$\,. Then
$$\dhqepsom{s}=\dhqepsom{}\dotplus\dhqepsom{s}\cap\bonqom^{\bot_\eps}$$
holds. Moreover,
$$\dhqepsom{s}\cap\bonqom^{\bot_\eps}=\Lin\set{E^q_{\sigma,m}}{\sigma<-s-N/2}\qquad.$$
\end{lem}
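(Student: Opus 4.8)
The plan is to establish the two claims in turn: first the direct decomposition, then the explicit description of the growing part. Directness is immediate from \eqref{dirichletb}, which gives $\dhqepsom{}\cap\bonqom^{\bot_\eps}=\{0\}$ and hence $\dhqepsom{}\cap(\dhqepsom{s}\cap\bonqom^{\bot_\eps})=\{0\}$. For the decomposition itself, take $E\in\dhqepsom{s}\subset\Lzqsom$. Even though $E$ may grow at infinity (as $s<-N/2$), every $b\in\bonqom$ has compact support, so the pairings $\skp{\eps E}{b}_{\lzqom}$ are finite. The linear map $\dhqepsom{}\to\cz^{d^q}$, $E_0\mapsto(\skp{\eps E_0}{b}_{\lzqom})_{b\in\bonqom}$, is injective because its kernel is $\dhqepsom{}\cap\bonqom^{\bot_\eps}=\{0\}$, and since both spaces have dimension $d^q$ it is bijective. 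Choosing $E_0\in\dhqepsom{}$ matching the values $\skp{\eps E}{b}_{\lzqom}$, I get $E-E_0\in\dhqepsom{s}\cap\bonqom^{\bot_\eps}$ (using $\dhqepsom{}\subset\dhqepsom{s}$ for $s<0$), which proves $\dhqepsom{s}=\dhqepsom{}\dotplus(\dhqepsom{s}\cap\bonqom^{\bot_\eps})$.

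For the second equation I first check the inclusion ``$\supseteq$''. For $\sigma<-s-N/2$ the hypothesis $\tau>-s-N/2$ yields $\tau>\sigma$, which together with $\tau\geq N/2-1$ guarantees that the special growing Dirichlet form $\Esm$ of {\paulydissEsmdef} exists. By its defining properties $\Esm\in\dhqepsom{<-\Nh-\sigma}\cap\bonqom^{\bot_\eps}$, and since $s<-N/2-\sigma$ this gives $\Esm\in\dhqepsom{s}\cap\bonqom^{\bot_\eps}$. The family $\{\Esm\}$ is linearly independent because $\Esm$ has leading asymptotics $\turmd{q}{0}{\sigma}{m}{+}$, and these positive harmonic tower forms are homogeneous of the distinct degrees $\sigma$ and are themselves independent. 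This yields $\Lin\setb{\Esm}{\sigma<-s-N/2}\subset\dhqepsom{s}\cap\bonqom^{\bot_\eps}$.

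The reverse inclusion ``$\subseteq$'' is the core of the argument. Given $E\in\dhqepsom{s}\cap\bonqom^{\bot_\eps}$, I would develop $E$ in the exterior region into spherical harmonics and tower forms via {\paulystaticsphdeco}, exactly as in the proof of Lemma \ref{rotdivmap}. Because $\Delta=\rot\pdiv+\pdiv\rot$ and $E$ satisfies $\rot E=0$ and $\pdiv\eps E=0$ with $\eps=\id+\epsd$, $\epsd$ decaying at rate $\tau$, the form $E$ is componentwise harmonic up to a faster-decaying perturbation; hence its growing part consists only of the positive harmonic tower forms $\turmd{q}{0}{\sigma}{m}{+}$. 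The membership $E\in\Lzqsom$ admits only those growing terms $\turmd{q}{0}{\sigma}{m}{+}$ that are themselves square-integrable in $\Lzqsom$, which happens precisely for $\sigma<-s-N/2$; say these occur with coefficients $c_{\sigma,m}$. Since $\Esm$ and $\turmd{q}{0}{\sigma}{m}{+}$ share the same leading behaviour, $E-\sum c_{\sigma,m}\Esm$ has no growing part and therefore lies in $\dhqepsom{}$; as $E$ and every $\Esm$ lie in $\bonqom^{\bot_\eps}$, this remainder lies in $\dhqepsom{}\cap\bonqom^{\bot_\eps}=\{0\}$ by \eqref{dirichletb}, whence $E=\sum c_{\sigma,m}\Esm$.

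The main obstacle is precisely this last expansion step: one must rigorously justify that a weighted Dirichlet form in $\dhqepsom{s}$ admits an asymptotic development whose growing part is spanned exactly by finitely many positive harmonic tower forms, and that subtracting the matching $\Esm$ produces a genuinely square-integrable Dirichlet form. This is where the exterior regularity theory for the $\rot$-$\pdiv$ system and the decay hypotheses $\tau>-s-N/2$ and $\tau\geq N/2-1$ on $\epsd$ enter decisively; the condition $s\notin\tilde{\pI}$ ensures $-s-N/2\notin\nzn$, so the cut-off degree is non-integral and the index set $\{\sigma<-s-N/2\}$ is unambiguous.
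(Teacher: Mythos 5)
Your argument for the first identity (the finite\--dimensional duality between $\dhqepsom{}$ and $\Lin\bonqom$ induced by $\skp{\eps\,\cdot\,}{\,\cdot\,}_{\lzqom}$, injective by \eqref{dirichletb} and bijective by the count $\dim\dhqepsom{}=\#\bonqom{}=d^q$) is correct and a touch more elementary than the paper, which instead obtains both identities at once from the single inclusion $\dhqepsom{s}\subset\dhqepsom{}\dotplus\Lin\set{\Esm}{\sigma<-s-N/2}$. Your inclusion ``$\supseteq$'' for the second identity, including the verification that $\tau>-s-N/2$ and $\tau\geq N/2-1$ guarantee the existence of the $\Esm$, is also fine.

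However, the step you yourself call ``the main obstacle'' is precisely the content of the proof, and you have not supplied it. The expansion {\paulystaticsphdeco} applies to forms that are \emph{exactly} irrotational and solenoidal in an exterior region; your $E$ satisfies only $\pdiv\eps E=0$, so in general $\pdiv E=-\pdiv\epsd E\neq0$, and ``harmonic up to a faster\--decaying perturbation'' does not by itself produce an asymptotic development whose growing part is spanned by the $\turmd{q}{0}{\sigma}{m}{+}$. The missing idea is a corrector: by the local regularity result {\paulydissregtheoaussenlokal} one has $\restr{\pdiv E}{A_\rho}=-\restr{\pdiv\epsd E}{A_\rho}\in\Lzq{s+\tau+1}(A_\rho)$, and the hypothesis $\tau>-s-N/2$ yields exactly $s+\tau+1>1-N/2$, so $\rot\eta E$ and $\pdiv\eta E$ lie in spaces of weight $>1-N/2$ (and are orthogonal to the relevant $\B$-forms); Lemma \ref{rotdivmap}, applied at that \emph{higher} weight, then furnishes $e\in\ronqtom\cap\dqtom$ with some $t>-N/2$ solving $\rot e=\rot\eta E$ and $\pdiv e=\pdiv\eta E$, and $H:=\eta E-e$ is an exact ($\eps=\id$) Dirichlet form, genuinely harmonic in $A_{r_0}$, to which {\paulystaticsphdeco} applies. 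Only then does the integrability comparison isolate the coefficients $h^+_{\sigma,m}$ of the $\turmd{q}{0}{\sigma}{m}{+}$ with $\sigma<-s-N/2$, and since $\Esm-\turmd{q}{0}{\sigma}{m}{+}\in\Lzqom{>-\Nh}$ one concludes $E-\sum h^+_{\sigma,m}\Esm=(1-\eta)E+e+h\in\dhqepsom{>-\Nh}=\dhqepsom{}$. Without this reduction the decisive expansion step of your proof is unsubstantiated, so the argument is incomplete at its core.
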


\begin{cor}\mylabel{weighteddirichletformscor}
The dimension $d^q_s$ of $\dhqepsom{s}$ is finite and independent of $\eps$\,. More precisely
$$d^q_s=d^q+\sum_{\sigma<-s-\Nh}\mu^q_\sigma\qquad.$$
Furthermore, the locally constant mapping
$$\Abb{d^q(\,\cdot\,)}{(-\infty,-N/2)\ohne\tilde{\pI}\cup(-N/2,N/2-1)}{\nzn}{s}{d^q_s}$$
is monotone decreasing. It jumps exactly at the points $s\in\tilde{\pI}$\,, i.e. $-s-N/2\in\nzn$\,.
\end{cor}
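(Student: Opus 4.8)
The plan is to deduce everything from Lemma \ref{weighteddirichletforms}, reducing the corollary to the combinatorics of the step function $s\mapsto\sum_{\sigma<-s-\Nh}\mu^q_\sigma$\,. Throughout I take $1\leq q\leq N-1$\,, the range of that lemma; the extremal ranks $q\in\{0,N\}$ are degenerate and follow at once from the explicit descriptions of $\dH{0}{s}{\eps}(\om)$ and $\dH{N}{s}{\eps}(\om)$ recalled at the beginning of the appendix.

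First I would read off the dimension formula. For $s\in(-\infty,-N/2)\ohne\tilde{\pI}$ the lemma provides the direct decomposition
$$\dhqepsom{s}=\dhqepsom{}\dotplus\dhqepsom{s}\cap\bonqom^{\bot_\eps}\qquad,$$
so that $d^q_s=\dim\dhqepsom{}+\dim\big(\dhqepsom{s}\cap\bonqom^{\bot_\eps}\big)$ by additivity of dimension over a direct sum; in particular $d^q_s$ is finite. Here $\dim\dhqepsom{}=d^q$\,, and by the second assertion of the lemma the second summand is spanned by the special growing Dirichlet forms $E^q_{\sigma,m}$ with $\sigma<-s-\Nh$ and $1\leq m\leq\mu^q_\sigma$\,. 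These forms are linearly independent: each satisfies $E^q_{\sigma,m}-\turmd{q}{0}{\sigma}{m}{+}\in\Lzqom{>-\Nh}$\,, the leading parts $\turmd{q}{0}{\sigma}{m}{+}$ are homogeneous of orders strictly increasing in $\sigma$ and, for each fixed $\sigma$\,, span a $\mu^q_\sigma$-dimensional block, so any nontrivial combination keeps a nonzero homogeneous leading term of highest order and hence cannot lie in the decaying space $\Lzqom{>-\Nh}$\,. Thus the second summand has dimension $\sum_{\sigma<-s-\Nh}\mu^q_\sigma$ and
$$d^q_s=d^q+\sum_{\sigma<-s-\Nh}\mu^q_\sigma\qquad.$$
For $s\in(-N/2,N/2-1)$ the sum is empty and $\dhqepsom{s}=\dhqepsom{}$ by {\paulystaticintdirichlet}, so the formula persists with value $d^q$\,.

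The independence of $\eps$ is then immediate: $d^q=\dim\dhqepsom{}=\beta_{q'}$ is a topological invariant of $\om$\,, and the multiplicities $\mu^q_\sigma$ are the explicit combinatorial numbers of the excerpt; neither depends on $\eps$\,.

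Finally I would settle monotonicity and the jump locations. Put $x:=-s-\Nh$\,, so that $x$ increases as $s$ decreases. The counting function $x\mapsto\sum_{\sigma<x}\mu^q_\sigma$ is nonnegative and nondecreasing, whence $s\mapsto d^q_s$ is monotone decreasing. On each component of the domain $s\notin\tilde{\pI}$ keeps $x\notin\nzn$\,, so $x$ stays strictly between two consecutive integers and the index set $\setb{\sigma}{\sigma<x}$ is constant; hence $d^q_s$ is locally constant. Its value increases by exactly $\mu^q_n$ when $x$ crosses an integer $n\in\nzn$ from below, i.e. when $s$ decreases through $s=-N/2-n\in\tilde{\pI}$\,; no jump occurs across the transition $s=-N/2$\,, since for $s\in(-N/2-1,-N/2)$ one has $x\in(0,1)$ and the sum is still empty. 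This is precisely the claimed jump behaviour at the points of $\tilde{\pI}$\,. The only point requiring genuine care is the linear independence of the $E^q_{\sigma,m}$ used in the count; everything else is bookkeeping once Lemma \ref{weighteddirichletforms} is in hand.
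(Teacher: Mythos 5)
Your overall strategy is the right one and matches the paper's: the corollary is meant to be an immediate consequence of Lemma \ref{weighteddirichletforms} (the paper gives no separate argument for it), and your dimension count, the linear independence of the $\Esm$ via their growing leading parts $\turmd{q}{0}{\sigma}{m}{+}$, the independence of $\eps$\,, and the monotonicity are all fine. However, there is one genuine error in the last step: you assert that ``no jump occurs across the transition $s=-N/2$'' because for $x=-s-\Nh\in(0,1)$ the sum $\sum_{\sigma<x}\mu^q_\sigma$ is ``still empty''. It is not empty: in this paper $\sigma$ ranges over $\nzn\ni0$ (the paper explicitly discusses $\mu^q_0=\binom{N}{q}$ and the forms $E^q_{0,m}\in\dhqepsom{<-\Nh}$), so for $x\in(0,1)$ the index $\sigma=0$ qualifies and the sum equals $\mu^q_0=\binom{N}{q}>0$ for $1\leq q\leq N-1$\,. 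Hence $d^q_s=d^q+\mu^q_0$ on $(-N/2-1,-N/2)$ while $d^q_s=d^q$ on $(-N/2,N/2-1)$ \big(the latter because $\dhqepsom{}=\dhqepsom{<\Nh-1}$\big), i.e. there \emph{is} a jump at $s=-N/2$\,, of size $\mu^q_0$\,.

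Note that your claim also contradicts the very statement you are proving: $-N/2=-N/2-0$ belongs to $\tilde{\pI}$ (the case $n=0$), and the corollary asserts jumps exactly at the points with $-s-N/2\in\nzn$\,, which includes $0$\,. The fix is purely a matter of the convention $0\in\nzn$: your own sentence ``the value increases by exactly $\mu^q_n$ when $x$ crosses an integer $n\in\nzn$ from below'' is correct as stated and already covers $n=0$\,; simply delete the erroneous final clause about $s=-N/2$\,. Everything else in your argument stands.
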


\begin{proof}
The directness of the sum follows by \eqref{dirichletb} and the inclusions
\begin{align*}
\dhqepsom{}\dotplus\dhqepsom{s}\cap\bonqom^{\bot_\eps}&\subset\dhqepsom{s}\qquad,\\
\Lin\set{E^q_{\sigma,m}}{\sigma<-s-N/2}&\subset\dhqepsom{s}\cap\bonqom^{\bot_\eps}
\intertext{are trivial. So it remains to prove}
\dhqepsom{s}&\subset\dhqepsom{}\dotplus\Lin\set{E^q_{\sigma,m}}{\sigma<-s-N/2}\qquad.
\end{align*}
Therefore, we pick some $E\in\dhqepsom{s}$\,. We observe
$E\in\qh{1}{q}{s}{}(A_\rho)$ by the regularity result {\paulydissregtheoaussenlokal} and even
$$\rot E=0\qqtext{,}\restr{\pdiv E}{A_\rho}=-\restr{\pdiv \epsd E}{A_\rho}\in\Lzq{s+\tau+1}(A_\rho)$$
for all $r_0<\rho<r_1$\,. Thus we have $\eta E\in\ronqsom\cap\dqsom$ with
\begin{align*}
\rot\eta E=C_{\rot,\eta}E&\in\ronqpevoxnom\cap\bqpeom^\bot\qquad,\\
\pdiv\eta E=C_{\pdiv,\eta}E-\eta\pdiv E&\in\pdi{q-1}{s+\tau+1}{0}{}(\om)\cap\bonqmeom^\bot\qquad.
\end{align*}
The assumptions on $\tau$ yield $s+\tau+1>1-N/2$\,. Thus by Lemma \ref{rotdivmap} there exists $e\in\ronqtom\cap\dqtom$
with some $t>-N/2$ solving
$$\rot e=\rot\eta E\qqtext{,}\pdiv e=\pdiv\eta E\qquad.$$
Therefore, $H:=\eta E-e\in\dH{q}{s}{}(\om)$\,.
Thus $\rot H=0$ and $\pdiv H=0$ in $A_{r_0}$ and we may represent $H$ in terms of a spherical harmonics expansion
$$\restr{H}{A_{r_0}}=\sum_{\gamma,n}h^-_{\gamma,n}\turmd{q}{0}{\gamma}{n}{-}+\hat{h}\hat{D}^{q,1}
+\sum_{\sigma<-s-\Nh}\sum_{m=1}^{\mu^q_\sigma}h^+_{\sigma,m}\turmd{q}{0}{\sigma}{m}{+}$$
with uniquely determined $h^-_{\gamma,n},\hat{h},h^+_{\sigma,m}\in\cz$ using {\paulystaticsphdeco}.
By {\paulystaticremint} the first term of the sum on the right hand side belongs to
$\Lzq{<\Nh}(A_{r_0})$\,, the second to $\Lzq{<\Nh-1}(A_{r_0})$
and the third to $\Lzqs(A_{r_0})$\,. We get
\begin{align*}
H-\sum_{\sigma<-s-\Nh}\sum_{m=1}^{\mu^q_\sigma}e_{\sigma,m}\turmd{q}{0}{\sigma}{m}{+}&\in\Lzq{<\Nh-1}(A_{r_0})\qquad,
\intertext{which yields}
h:=H-\sum_{\sigma<-s-\Nh}\sum_{m=1}^{\mu^q_\sigma}h^+_{\sigma,m}\Esm&\in\Lzqom{>-\Nh}\qquad.
\end{align*}
Finally we obtain
$$E-\sum_{\sigma<-s-\Nh}\sum_{m=1}^{\mu^q_\sigma}h^+_{\sigma,m}\Esm=(1-\eta)E+e+h\in\dhqepsom{>-\Nh}=\dhqepsom{}\qquad.$$
\end{proof}

\subsection{Vector fields in three dimensions}

Now we will translate our results to the classical framework of vector analysis.
Thus we switch to some (maybe) more common notations.

Let $N:=3$\,. We identify $1$-forms with vector fields via Riesz' representation theorem and
$2$-forms with $1$-forms via the Hodge star operator
and thus with vector fields as well.
Using Euclidean coordinates $\{x_1,x_2,x_3\}$ this means in detail
we identify the vector field
$$E=\dvec{E_1}{E_2}{E_3}$$
with the $1$-form
$$E_1\pd x_1+E_2\pd x_2+E_3\pd x_3$$
resp. with the $2$-form
$$E_1*\pd x_1+E_2*\pd x_2+E_3*\pd x_3
=E_1\pd x_2\wedge\pd x_3+E_2\pd x_3\wedge\pd x_1+E_3\pd x_1\wedge\pd x_2\quad.$$
Moreover, we identify the $3$-form $E\pd x_1\wedge\pd x_2\wedge\pd x_3$
with the $0$-form and/or function $E$\,.
We will denote these identification isomorphisms by $\cong$\,.
Then the exterior derivative and co-derivative turn to the classical differential operators
$$\grad=\nabla=\dvec{\p_1}{\p_2}{\p_3}\qqtext{,}\curl=\nabla\times\qqtext{,}\pdiv=\nabla\cdot$$
from vector analysis, where $\times$ resp. $\cdot$ denotes the vector resp. scalar product in $\rd$\,.
In particular we have the following identification table:
\begin{table}[h!]
\begin{center}
\fbox{\begin{tabular}{|c||c|c|c|c|}
\hline
& $q=0$ & $q=1$ & $q=2$ & $q=3$\\
\hline\hline
$\rot=\pd$ & $\grad$ & $\curl$ & $\pdiv$ & $0$\\
\hline
$\pdiv=\delta$ & $0$ & $\pdiv$ & $-\curl$ & $\grad$\\
\hline
\end{tabular}}
\end{center}
\caption{\label{identificationtable}Identification table}
\end{table}

\subsubsection{Tower functions and fields}

Let us briefly construct our tower forms once more in classical terms.
Using polar coordinates $\{r,\varphi,\vartheta\}$\,, i.e.
$$x=\Phi(r,\varphi,\vartheta)=r\dvec{\cos\varphi\cos\vartheta}{\sin\varphi\cos\vartheta}{\sin\vartheta}\qquad,$$
we have with an obvious notation
$$\dvec{\pd x_1}{\pd x_2}{\pd x_3}=J_\Phi\dvec{\pd r}{\pd\varphi}{\pd\vartheta}
=Q\dvec{\pd r}{r\cos\vartheta\pd\varphi}{r\pd\vartheta}\qquad,$$
where $Q:=[\ei_r\,\ei_\varphi\,\ei_\vartheta]$ is an orthonormal matrix and
$$\ei_r:=\dvec{\cos\vartheta\cos\varphi}{\cos\vartheta\sin\varphi}{\sin\vartheta}\qtext{,}\ei_\varphi:=\dvec{-\sin\varphi}{\cos\varphi}{0}
\qtext{,}\ei_\vartheta:=\dvec{-\sin\vartheta\cos\varphi}{-\sin\vartheta\sin\varphi}{\cos\vartheta}$$
the corresponding orthonormal basis of $\rd$\,. Since $\{\pd x_1,\pd x_2,\pd x_3\}$
is an orthonormal basis of $1$-forms, $\{\pd r,r\cos\vartheta\pd\varphi,r\pd\vartheta\}$ is an orthonormal basis
as well. Moreover, we have again with an obvious notation
$$\dvec{\pd r}{r\cos\vartheta\pd\varphi}{r\pd\vartheta}=Q^t\dvec{\pd x_1}{\pd x_2}{\pd x_3}
=\dvec{\ei_r^t}{\ei_\varphi^t}{\ei_\vartheta^t}\dvec{\pd x_1}{\pd x_2}{\pd x_3}
=\dvec{\ei_r\cdot\pd x}{\ei_\varphi\cdot\pd x}{\ei_\vartheta\cdot\pd x}\qquad,$$
which shows
$$\pd r\cong\ei_r\qqtext{,}r\cos\vartheta\pd\varphi\cong\ei_\varphi
\qqtext{,}r\pd\vartheta\cong\ei_\vartheta\qquad.$$
We will denote the representations of
$\grad$\,, $\curl$ and $\pdiv$ in polar coordinates by $\gradf$\,, $\curlf$ and $\pdivf$ as well as
their realizations on $S:=S^2$ by $\gradfs$\,, $\curlfs$ and $\pdivfs$\,.
These may be derived by the formula
$$(\nabla_x u)\circ\Phi=J_\Phi^{-t}\nabla_{r,\varphi,\vartheta}(u\circ\Phi)
=[\ei_r\quad(r\cos\vartheta)^\me\ei_\varphi\quad r^\me\ei_\vartheta]
\nabla_{r,\varphi,\vartheta}(u\circ\Phi)$$
and we then have the following representations:
\begin{align*}
\gradf u&=\ei_r\p_r u+\frac{1}{r}\gradfs u&&,&\gradfs u&=\frac{1}{\cos\vartheta}\ei_\varphi\p_\varphi u+\ei_\vartheta\p_\vartheta u\\
\curlf v&=\ei_r\times\p_r v+\frac{1}{r}\curlfs v&&,&\curlfs v&=\frac{1}{\cos\vartheta}\ei_\varphi\times\p_\varphi v+\ei_\vartheta\times\p_\vartheta v\\
\pdivf v&=\ei_r\cdot\p_r v+\frac{1}{r}\pdivfs v&&,&\pdivfs v&=\frac{1}{\cos\vartheta}\ei_\varphi\cdot\p_\varphi v+\ei_\vartheta\cdot\p_\vartheta v
\end{align*}
We note that we do not distinguish between $u$ resp. $v$ and $u\circ\Phi$ resp. $v\circ\Phi$ anymore.
Moreover, in polar coordinates the Laplacian reads
$$\Deltaf=\p_r^2+\frac{2}{r}\p_r+\frac{1}{r^2}\Deltafs\qquad,$$
where
$$\Deltafs=\frac{1}{\cos^2\vartheta}\p_\varphi^2+\p_\vartheta^2-\frac{\sin\vartheta}{\cos\vartheta}\p_\vartheta$$
is the Laplace-Beltrami operator.

Let us introduce the classical spherical harmonics of order $n$
$$y_{n,m}\qqtext{,}n\in\nzn\,,\,m=1,\dots,2n+1\qquad,$$
which form a complete orthonormal system in $\lz(S)$\,, i.e. $\skp{y_{n,m}}{y_{\ell,k}}_{\lz(S)}=\delta_{n,\ell}\delta_{m,k}$\,,
and satisfy
$$\big(\Deltafs+\lambda_n\big)y_{n,m}=0\qqtext{,}\lambda_n:=n(n+1)\qquad,$$
as well as the corresponding potential functions
$$z_{\pm,n,m}:=r^{\theta_{\pm,n}}y_{n,m}\qqtext{,}n\in\nzn\,,\,m=1,\dots,2n+1\qquad,$$
which are homogeneous of degree
$$\theta_{\pm,n}:=\begin{cases}n&\text{, if }\pm=+\\-n-1&\text{, if }\pm=-\end{cases}$$
and solve
$$\Delta z_{\pm,n,m} =\Deltaf z_{\pm,n,m} =0\qquad.$$
\big(See for example \cite[Kapitel VII, � 4]{couranthilberteins} or \cite[chapter 2.3]{coltonkress}.\big)

Moreover, for $k,n\in\nzn$\,, $m=1,\dots,2n+1$ we define
$$z_{\pm,n,m}^k:=\xi_{\pm,n}^{k}r^{2k}z_{\pm,n,m}=\xi_{\pm,n}^{k}r^{\theta_{\pm,n}^{2k}}y_{n,m}\qquad,$$
where $\bds\xi_{\pm,n}^k:=\frac{\Gamma(1\pm n\pm1/2)}{4^k\cdot k!\cdot\Gamma(k+1\pm n\pm1/2)}\eds$ and $\Gamma$ denotes the gamma-function.
The $z_{\pm,n,m}^k$ are homogeneous of degree $\theta_{\pm,n}^{2k}$ with
$$\theta_{\pm,n}^\ell:=\ell+\theta_{\pm,n}$$
and satisfy
$$\Delta z_{\pm,n,m}^k=z_{\pm,n,m}^{k-1}\qquad,$$
where $z_{\pm,n,m}^{-1}:=0$\,. With the aid of these functions, which we will call a '$\Delta$-tower',
we construct for $k\in\nzn$ the functions and fields
\begin{align*}
U^{2k}_{\pm,n,m}&:=z_{\pm,n,m}^k\qquad,\\
U^{2k-1}_{\pm,n,m}&:=\grad U^{2k}_{\pm,n,m}\qquad,
\intertext{which we will call a '$\pdiv\grad$-tower', as well as the fields}
V^{2k}_{\pm,n,m}&:=r\ei_r\times\grad z_{\pm,n,m}^k=\ei_r\times\gradfs z_{\pm,n,m}^k
=\xi_{\pm,n}^{k}r^{\theta_{\pm,n}^{2k}}\ei_r\times Y_{n,m}\qquad,\\
V^{2k-1}_{\pm,n,m}&:=-\curl V^{2k}_{\pm,n,m}\qquad,
\end{align*}
which we will call a '$-\curl\curl$-tower'. Here $Y_{n,m}:=\gradfs y_{n,m}$\,.
The fields $U^{2k-1}_{\pm,n,m}$ are irrotational and the fields $V^\ell_{\pm,n,m}$ solenoidal.
Moreover, we have
$$\pdiv U^{2k+1}_{\pm,n,m}=U^{2k}_{\pm,n,m}\qqtext{,}\curl V^{2k+1}_{\pm,n,m}=V^{2k}_{\pm,n,m}$$
as well as $\pdiv U^{-1}_{\pm,n,m}=0$ and $\curl V^{-1}_{\pm,n,m}=0$ and thus
\begin{align*}
\Delta U^{2k}_{\pm,n,m}&=\pdiv\grad U^{2k}_{\pm,n,m}=U^{{2k}-2}_{\pm,n,m}\qquad,\\
\Delta U^{2k+1}_{\pm,n,m}&=\grad\pdiv U^{2k+1}_{\pm,n,m}=U^{2k-1}_{\pm,n,m}\qquad,\\
\Delta V^\ell_{\pm,n,m}&=-\curl\curl V^\ell_{\pm,n,m}=V^{\ell-2}_{\pm,n,m}\qquad,
\end{align*}
where $U^{-2}_{\pm,n,m}:=0$\,, $V^{-2}_{\pm,n,m}:=0$\,.
We mention that $U^\ell_{\pm,n,m}$ and $V^\ell_{\pm,n,m}$ are homogeneous of degree $\theta_{\pm,n}^\ell$\,.
Moreover,
$$U^{-1}_{\pm,n,m}=V^{-1}_{\pm,n,m}\qquad.$$
Thus we define
$$P_{\pm,n,m}:=U^{1}_{\pm,n,m}-V^{1}_{\pm,n,m}\qquad.$$
Then $U^\ell_{\pm,n,m}$\,, $V^\ell_{\pm,n,m}$\,, $\ell=-1,0$ and even $P_{\pm,n,m}$ are potential fields resp. functions.

The next picture may illustrate the denotations tower:
\begin{figure}[h!]
$$\begin{array}{|r||ccrclcc|}
\dots&&&\dots\,\qquad&\Big|&\,\dots&&\\
&&&\text{\rm\scriptsize div}\downarrow\,\qquad&\Big|&\,\downarrow\text{\rm\scriptsize curl}&&\\
\text{\rm 3. floor}&&&U^{2}_{\pm,n,m}&\Big|&V^{2}_{\pm,n,m}&\xrightarrow{\pdiv}&0\\
&&&\text{\rm\scriptsize grad}\downarrow\,\qquad&\Big|&\,\downarrow\text{\rm\scriptsize $-$curl}&&\\
\text{\rm 2. floor}&0&\xleftarrow{\curl}&U^{1}_{\pm,n,m}&\Big|&V^{1}_{\pm,n,m}&\xrightarrow{\pdiv}&0\\
&&&\text{\rm\scriptsize div}\downarrow\,\qquad&\Big|&\,\downarrow\text{\rm\scriptsize curl}&&\\
\text{\rm 1. floor}&&&U^{0}_{\pm,n,m}&\Big|&V^{0}_{\pm,n,m}&\xrightarrow{\pdiv}&0\\
&&&\text{\rm\scriptsize grad}\downarrow\,\qquad&\Big|&\,\downarrow\text{\rm\scriptsize $-$curl}&&\\
\text{\rm ground}&0&\xleftarrow{\curl}&U^{-1}_{\pm,n,m}&=&V^{-1}_{\pm,n,m}&\xrightarrow{\pdiv}&0\\
&&&\text{\rm\scriptsize div}\downarrow\,\qquad&\Big|&\,\downarrow\text{\rm\scriptsize curl,div}&&\\
&&&0\,\qquad&\Big|&\,0&&\\
\hline\hline
&\multicolumn{3}{c}{\text{\sf $\pdiv\grad$-tower}}&\Big|&\multicolumn{3}{c|}{\text{\sf $-\curl\curl$-tower}}\\
\hline
\end{array}$$
\caption{\label{towerspic}Towers}
\end{figure}\\
In the exceptional case $(n,m)=(0,1)$ the function $z^k_{\pm,0,1}$ is a multiple of $r^{2k+\theta_{\pm,0}}$ and thus
we get $V^\ell_{\pm,0,1}=0$ for all $\ell$ as well as an exceptional $\pdiv\grad$-tower
\begin{align*}
U^{2k}_{\pm,0,1}&=\xi_{\pm,0}^{k}r^{2k-\delta_{-,\pm}}\qquad,\\
U^{2k-1}_{\pm,0,1}&=\grad U^{2k}_{\pm,0,1}=\xi_{\pm,0}^{k}\p_rr^{2k-\delta_{-,\pm}}\ei_r=\xi_{\pm,0}^{k}(2k-\delta_{-,\pm})r^{2k-1-\delta_{-,\pm}}\ei_r\qquad,
\end{align*}
where $U^{-1}_{+,0,1}=0$\,.

Let us briefly compare these classical towers with the $q$-form towers:
On $S$ we identify $1$-forms with linear combinations of $\ei_\varphi$ and $\ei_\vartheta$
as well as $2$-forms with scalar functions.
More precisely a $1$-form $\omega_\varphi\cos\vartheta\pd\varphi+\omega_\vartheta\pd\vartheta$
will be identified with the tangential vector field $\omega_\varphi\ei_\varphi+\omega_\vartheta\ei_\vartheta$
and a $2$-form $\omega\cos\vartheta\pd\varphi\wedge\pd\vartheta$ with the function $\omega$\,.
Then our operators $\rho,\tau$ and $\rhoh,\tauh$ from \cite{sphharm} turn to
\begin{table}[h!]
\begin{center}
\fbox{\begin{tabular}{|c||c|c|c|c|}
\hline
& $q=0$ & $q=1$ & $q=2$ & $q=3$\\
\hline\hline
$\rho v\cong$ & $0$ & $v\cdot\ei_r$ & $v\times\ei_r$ & $v$\\
\hline
$\tau v\cong$ & $v$ & $-(v\times\ei_r)\times\ei_r$ & $v\cdot\ei_r$ & $0$\\
\hline
$\rhoh v\cong$ & $v\ei_r$ & $-v\times\ei_r$ & $v$ & ---\\
\hline
$\tauh v\cong$ & $v$ & $v$ & $v\ei_r$ & ---\\
\hline
\end{tabular}}\qquad,
\end{center}
\caption{\label{sphop}Spherical operators}
\end{table}\\
where
\begin{align*}
-(v\times\ei_r)\times\ei_r&=v\cdot\ei_\varphi\ei_\varphi+v\cdot\ei_\vartheta\ei_\vartheta
=v-v\cdot\ei_r\ei_r\qquad,\\
v\times\ei_r&=v\cdot\ei_\vartheta\ei_\varphi-v\cdot\ei_\varphi\ei_\vartheta\qquad.
\end{align*}
We note
\begin{align*}
y_{0,1}&\cong S^0_{0,1}\qquad\text{is constant},\\
y_{n,m}&\cong T^0_{n-1,m}\qquad,\\
Y_{n,m}=\gradfs y_{n,m}&\cong\ie n^{1/2}(n+1)^{1/2}S^1_{n-1,m}
\end{align*}
for $n\in\nz$ in the terminology of \cite{sphharm}.
Then for $n\in\nz$ we get up to constants
\begin{align*}
U^{2k}_{\pm,n,m}&\cong D^0_{(\pm,2k+1,n-1,m)}=*R^3_{(\pm,2k+1,n-1,m)}\qquad,\\
U^{2k-1}_{\pm,n,m}&\cong R^1_{(\pm,2k,n-1,m)}=*D^2_{(\pm,2k,n-1,m)}\qquad,\\
V^\ell_{\pm,n,m}&\cong D^1_{(\pm,\ell+1,n-1,m)}=*R^2_{(\pm,\ell+1,n-1,m)}
\intertext{and for $n=0$}
U^{2k}_{+,0,1}&\cong D^0_{(+,2k,0,1)}=*R^3_{(+,2k,0,1)}\qquad,\\
U^{2k-1}_{+,0,1}&\cong R^1_{(+,2k-1,0,1)}=*D^2_{(+,2k-1,0,1)}\qquad,\\
U^{2k}_{-,0,1}&\cong D^0_{(-,2k+2,0,1)}=*R^3_{(-,2k+2,0,1)}\qquad,\\
U^{2k-1}_{-,0,1}&\cong R^1_{(-,2k+1,0,1)}=*D^2_{(-,2k+1,0,1)}\qquad.
\end{align*}
Finally for $s\in\rz$ and $\ell=-1,0$ we put \big(with $\Lin\emptyset:=\{0\}$\big)
\begin{align*}
\towerV{\ell}{s}&:=\Lin\setb{V^\ell_{-,n,m}}{V^\ell_{-,n,m}\notin\Lzs(A_1)}=\Lin\set{V^\ell_{-,n,m}}{n\leq\ell+s+1/2}\quad,\\
\towerU{\ell}{s}&:=\Lin\setb{U^\ell_{-,n,m}}{U^\ell_{-,n,m}\notin\Lzs(A_1)}=\Lin\set{U^\ell_{-,n,m}}{n\leq\ell+s+1/2}\quad,\\
\towerP{}{s}&:=\Lin\setb{P_{-,n,m}}{P_{-,n,m}\notin\Lzs(A_1)}=\Lin\set{P_{-,n,m}}{n\leq s+3/2}\quad,\\
\towerUh{\ell}{}:=\towerUh{\ell}{s}&:=\Lin\setb{U^\ell_{-,0,1}}{U^\ell_{-,0,1}\notin\Lzs(A_1)}=\Lin\set{U^\ell_{-,0,1}}{0\leq\ell+s+1/2}\quad.
\end{align*}

\subsubsection{Results for vector fields}

For some operator $\Diamond\in\{\grad,\curl,\pdiv\}$ and $s\in\rz$ we define the Hilbert spaces \mylabel{clHdef}
\begin{align*}
\clH{s}(\Diamond,\om)&:=\setb{u\in\Lzsom}{\Diamond u\in\Lzom{s+1}}&&,&
\clH{s}(\overset{\circ}{\Diamond},\om)&:=\ol{\cunom}\quad,\\
\clH{s}(\Diamond_0,\om)&:=\setb{\clH{s}(\Diamond,\om)}{\Diamond u=0}&&,&
\clH{s}(\overset{\circ}{\Diamond}_0,\om)&:=\setb{\clH{s}(\overset{\circ}{\Diamond},\om)}{\Diamond u=0}\quad,
\end{align*}
where the closure is taken in $\clH{s}(\Diamond,\om)$\,.
Then the spaces $\ronqsom$ and $\dqsom$ turn to the usual Sobolev spaces, i.e.:
\begin{table}[h!]
\begin{center}
\fbox{\begin{tabular}{|c||c|c|c|c|}
\hline
& $q=0$ & $q=1$ & $q=2$ & $q=3$\\
\hline\hline
$\ronqsom$ & $\clH{s}(\gradon,\om)=\clHon{s}^1(\om)$ & $\clH{s}(\curlon,\om)$ & $\clH{s}(\pdivon,\om)$ & $\Lzsom$\\
\hline
$\dqsom$ & $\Lzsom$ & $\clH{s}(\pdiv,\om)$ & $\clH{s}(\curl,\om)$ & $\clH{s}(\grad,\om)=\clH{s}^1(\om)$\\
\hline
\end{tabular}}
\end{center}
\caption{\label{sobolevsp}Sobolev spaces}
\end{table}\\
For two operators $\Diamond,\Box\in\big\{\overset{(\circ)}{\grad}_{(0)},\overset{(\circ)}{\curl}_{(0)},\overset{(\circ)}{\pdiv}_{(0)}\big\}$ we define
$$\clH{s}(\Diamond,\Box,\om):=\clH{s}(\Diamond,\om)\cap\clH{s}(\Box,\om)\qquad.$$
The generalized boundary condition $\iota^*E=0$ for a $q$-form $E$ from $\ronqlocom$ turns to the usual boundary
conditions $\gamma E=\restr{E}{\p\om}=0$\,, $\gamma_t E=\restr{\nu\times E}{\p\om}=0$ and $\gamma_n E=\restr{\nu\cdot E}{\p\om}=0$
(for $q=0,1,2$) weakly formulated in the spaces $\clH{}(\gradon,\om)$\,, $\clH{}(\curlon,\om)$ and $\clH{}(\pdivon,\om)$\,,
where $\nu$ denotes the outward unit normal at $\p\om$ and $\gamma$ the trace as well as $\gamma_t$ resp. $\gamma_n$
the tangential resp. normal trace of the vector field $E$\,.
The linear transformations $\eps$\,, $\nu$\,, $\mu$ ($\nu$ and $\mu$ may be identified!)
can be considered as real valued, variable, symmetric and uniformly positive definite matrices
with $\text{L}^\infty(\om)$-entries, which satisfy the asymptotics at infinity assumed in section 2 and 3.
Moreover, for $\Diamond,\Box\in\big\{\overset{(\circ)}{\curl}_{(0)},\overset{(\circ)}{\pdiv}_{(0)}\big\}$ we define
$$\clH{s}(\Box\eps,\om):=\eps^\me\clH{s}(\Box,\om)\qtext{,}\clH{s}(\Diamond,\Box\eps,\om):=\clH{s}(\Diamond,\om)\cap\clH{s}(\Box\eps,\om)\quad.$$
Now we have two kinds of Dirichlet fields. The first ones, the classical Dirichlet fields,
$$\dH{}{s}{\eps}(\om):=\clH{s}(\curlonn,\pdivn\eps,\om)\cong\dH{1}{s}{\eps}(\om)\qqtext{,}s\in\rz$$
correspond to $q=1$ and the second ones, the classical Neumann fields,
$$\nH{}{s}{\eps}(\om):=\clH{s}(\curln,\pdivonn\eps,\om)\cong\eps^\me\dH{2}{s}{\eps^\me}(\om)\qqtext{,}s\in\rz$$
correspond to $q=2$\,.
Moreover, we have the compactly supported fields
$$\pb{1}{\circ}(\om)\cong:\calBon{1}(\om)\subset\clH{}(\curlonn,\om)\qqtext{,}\pb{2}{\circ}(\om)\cong:\calBon{2}(\om)\subset\clH{}(\pdivonn,\om)$$
and the fields with bounded supports
$$\B^2(\om)\cong:\calB(\om)\subset\clH{}(\curln,\om)\qquad.$$

Let $s>-1/2$\,. Using the Dirichlet and Neumann fields we put
$$\clbH{s}(\Diamond,\om):=\clH{s}(\Diamond,\om)\cap\dH{}{}{}{}(\om)^\bot\qqtext{,}
\clbHt{s}(\Diamond,\om):=\clH{s}(\Diamond,\om)\cap\nH{}{}{}{}(\om)^\bot$$
and define in the same way $\clbH{s}(\Diamond,\Box,\om)$\,, $\clbH{s}(\Diamond,\Box\eps,\om)$ and
$\clbHt{s}(\Diamond,\Box,\om)$\,, $\clbHt{s}(\Diamond,\Box\eps,\om)$\,.
Then we have
\begin{align*}
\clbH{s}(\pdivn,\om)&=\clH{s}(\pdivn,\om)\cap\dH{}{}{\eps}{}(\om)^\bot=\clH{s}(\pdivn,\om)\cap\calBon{1}{}(\om)^\bot&&,\\
\clbHt{s}(\pdivonn,\om)&=\clH{s}(\pdivonn,\om)\cap\nH{}{}{\eps}{}(\om)^\bot=\clH{s}(\pdivonn,\om)\cap\calB(\om)^\bot&&,\\
\clbHt{s}(\curln,\om)&=\clH{s}(\curln,\om)\cap\nH{}{}{\eps}{}(\om)^{\bot_\eps}=\clH{s}(\curln,\om)\cap\calBon{2}{}(\om)^\bot&&,\\
\clbH{s}(\curlonn,\om)&=\clH{s}(\curlonn,\om)\cap\dH{}{}{\eps}{}(\om)^{\bot_\eps}&&,
\end{align*}
and thus except of the last one the definitions of these spaces extend to all $s\in\rz$\,. Moreover, we set for $s>-1/2$
$$\clbLzseps(\om):=\Lzsom\cap\dH{}{}{\eps}{}(\om)^{\bot_\eps}\qqtext{,}\clbLtzseps(\om):=\Lzsom\cap\nH{}{}{\eps}{}(\om)^{\bot_\eps}\qquad.$$
We get

\begin{lem}\mylabel{clalzdirichletzerl}
Let $s>-1/2$\,. Then the direct decompositions
\begin{align*}
\Lzsom&=\clbLzseps(\om)\dotplus\Lin\calBon{1}{}(\om)\qquad,\\
\Lzsom&=\clbLtzseps(\om)\dotplus\eps^\me\Lin\calBon{2}{}(\om)=\clbLtzseps(\om)\dotplus\Lin\calB(\om)
\end{align*}
hold. If additionally $s<1/2$\,, then
$$\Lzsom=\clbLzseps(\om)\oplus_\eps\dH{}{}{\eps}{}(\om)\qqtext{,}\Lzsom=\clbLtzseps(\om)\oplus_\eps\nH{}{}{\eps}{}(\om)\qquad.$$
\end{lem}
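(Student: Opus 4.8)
The plan is to obtain Lemma \ref{clalzdirichletzerl} as the $N=3$, $q\in\{1,2\}$ transcription of Lemma \ref{lzdirichletzerl} and, for the orthogonal statements, of the trivial decomposition \eqref{trivodeco}, pushed through the identification isomorphisms $\cong$ of this subsection (Tables \ref{identificationtable} and \ref{sobolevsp}). First I would treat the Dirichlet line by specializing Lemma \ref{lzdirichletzerl} to $q=1$, where $\cong$ is the plain Riesz identification and no Hodge star intervenes. Since $1-N/2=-1/2$ for $N=3$, the hypothesis $s>-1/2$ is exactly $s>1-N/2$, so Lemma \ref{lzdirichletzerl} yields
$$\qLzom{1}{s}=\big(\qLzom{1}{s}\cap\dH{1}{}{\eps}(\om)^{\bot_\eps}\big)\dotplus\Lin\pb{1}{\circ}(\om)\qquad.$$
Using $\dH{}{}{\eps}(\om)\cong\dH{1}{}{\eps}(\om)$, $\clbLzseps(\om)\cong\qLzom{1}{s}\cap\dH{1}{}{\eps}(\om)^{\bot_\eps}$ and $\calBon{1}(\om)\cong\pb{1}{\circ}(\om)$ this transcribes verbatim into the first decomposition.

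For the Neumann line I would specialize Lemma \ref{lzdirichletzerl} to $q=2$, where $\cong$ additionally involves the Hodge star. The star sends $2$-forms to $1$-forms and interchanges $\eps$ and $\eps^\me$, concretely $\nH{}{}{\eps}(\om)\cong\eps^\me\dH{2}{}{\eps^\me}(\om)$. By the symmetry of $\eps$ one has $\skp{\eps v}{\eps^\me w}_{\lzom}=\skp{v}{w}_{\lzom}$, so the $\bot_\eps$-complement of the Neumann fields becomes a plain $\bot$-complement of the $2$-form Dirichlet fields; by {\paulystaticdomromequi} the latter is characterized $\eps$-independently through the fields $\pb{2}{\circ}(\om)\cong\calBon{2}(\om)$ and $\B^2(\om)\cong\calB(\om)$, which is exactly the content of the identities $\nH{}{}{\eps}(\om)^{\bot_\eps}=\calBon{2}(\om)^\bot$ and $\nH{}{}{\eps}(\om)^{\bot}=\calB(\om)^\bot$ recorded in this subsection. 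In particular $\clbLtzseps(\om)=\Lzsom\cap\calBon{2}(\om)^\bot$ is $\eps$-independent. Feeding the two versions of Lemma \ref{lzdirichletzerl} for $q=2$ through $\cong$ then produces the two displayed Neumann decompositions: the complement $\eps^\me\Lin\calBon{2}(\om)$ is a direct complement because $\eps$ is positive definite (so $\Lin\calBon{2}(\om)\cap\big(\Lin\calBon{2}(\om)\big)^{\bot_{\eps^\me}}=\{0\}$), while the complement $\Lin\calB(\om)$ is a direct complement by the nondegeneracy of the pairing between $\calB(\om)$ and $\calBon{2}(\om)$ encoded in \eqref{dirichletb}, the dimensional count $\dim\nH{}{}{\eps}(\om)=\#\calBon{2}(\om)=\#\calB(\om)$ matching the codimension of $\clbLtzseps(\om)$ in both cases.

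Finally, for the orthogonal statements under the extra hypothesis $s<1/2$ I would invoke \eqref{trivodeco}. For $N=3$ the admissible range $1-N/2<s<N/2-1$ there is precisely $-1/2<s<1/2$, and since $q=1$ as well as $q=N-1=2$ fall under the first alternative of \eqref{trivodeco}, both ranks are covered. Transcribing the $q=1$ instance $\qLzom{1}{s}=\big(\qLzom{1}{s}\cap\dH{1}{}{\eps}(\om)^{\bot_\eps}\big)\oplus_\eps\dH{1}{}{\eps}(\om)$ and its $q=2$ analogue through $\cong$ yields the two $\oplus_\eps$-decompositions $\Lzsom=\clbLzseps(\om)\oplus_\eps\dH{}{}{\eps}(\om)$ and $\Lzsom=\clbLtzseps(\om)\oplus_\eps\nH{}{}{\eps}(\om)$.

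The main obstacle is the Hodge-star bookkeeping for $q=2$: one must verify that the swap $\eps\leftrightarrow\eps^\me$ induced by $*$ is consistent with the weighted orthogonality $\bot_\eps$ defining $\clbLtzseps(\om)$, and that the two vector-analytic complements $\eps^\me\Lin\calBon{2}(\om)$ and $\Lin\calB(\om)$ are exactly the images of the $\Lin\pb{2}{\circ}(\om)$- and $\eps^\me\Lin\B^2(\om)$-versions of Lemma \ref{lzdirichletzerl} at $q=2$. Once the identification dictionary is applied consistently, no new analysis is required and the directness of all sums is inherited from Lemma \ref{lzdirichletzerl} and \eqref{dirichletb}.
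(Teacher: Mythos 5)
Your route is the paper's own: Lemma \ref{clalzdirichletzerl} carries no separate proof precisely because it is meant as the $N=3$, $q\in\{1,2\}$ transcription of Lemma \ref{lzdirichletzerl} (with $1-N/2=-1/2$) and, for the orthogonal statements on $-1/2<s<1/2$, of \eqref{trivodeco}; your treatment of the $q=1$ line and of the two $\oplus_\eps$-decompositions is exactly that.

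One correction to the $q=2$ bookkeeping. The identity $\clbLtzseps(\om)=\Lzsom\cap\calBon{2}{}(\om)^\bot$ that you draw "in particular" from {\paulystaticdomromequi} is not available: that corollary, and the identities recorded in this subsection, equate $\nH{}{}{\eps}(\om)^{\bot_\eps}$ with $\calBon{2}{}(\om)^\bot$ only \emph{after} intersecting with $\clH{s}(\curln,\om)$ resp. $\clH{s}(\pdivonn,\om)$; on all of $\Lzsom$ the two complements merely share the codimension $d^2$, and equality would force $\Lin\pb{2}{\circ}(\om)$ to coincide with the (generally non-compactly-supported) space $\dH{2}{}{\eps^\me}(\om)$. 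Fortunately the claim is not needed. Since $\nH{}{}{\eps}(\om)\cong\eps^\me\dH{2}{}{\eps^\me}(\om)$ and $\skp{\eps v}{\eps^\me w}_{\lzom}=\skp{v}{w}_{\lzom}$, one has $\eps\,\clbLtzseps(\om)\cong\qLzom{2}{s}\cap\dH{2}{}{\eps^\me}(\om)^{\bot_{\eps^\me}}$, so the clean mechanics is to apply Lemma \ref{lzdirichletzerl} and \eqref{trivodeco} to $2$-forms with the (equally admissible) transformation $\eps^\me$ and then multiply the resulting decomposition through by $\eps^\me$: this sends $\Lin\pb{2}{\circ}(\om)$ to $\eps^\me\Lin\calBon{2}{}(\om)$, sends $(\eps^\me)^\me\Lin\B^{2}(\om)=\eps\Lin\B^{2}(\om)$ to $\Lin\calB(\om)$, and sends $\dH{2}{}{\eps^\me}(\om)$ to $\nH{}{}{\eps}(\om)$, which is verbatim the stated lemma. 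With this the directness of all sums is inherited from Lemma \ref{lzdirichletzerl}, and your ad hoc positive-definiteness argument for the complement $\eps^\me\Lin\calBon{2}{}(\om)$ is superfluous (and by itself would not prove directness of the sum in question).
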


Let us note that the operator $\Delta_\eps$ reads as follows:
\begin{table}[h!]
\begin{center}
\fbox{\begin{tabular}{|c||c|}
\hline
$q$ & $\Delta_\eps$ \\
\hline\hline
$0$ & $\eps^\me\pdiv\grad$ \\
\hline
$1$ & $\grad\pdiv-\eps^\me\curl\curl$ \\
\hline
$2$ & $-\curl\curl+\eps^\me\grad\pdiv$ \\
\hline
$3$ & $\pdiv\grad$ \\
\hline
\end{tabular}}
\end{center}
\caption{\label{genlap}Generalized Laplacian}
\end{table}\\
Thus we define
$$\square_\eps:=\grad\pdiv-\eps^\me\curl\curl=\Delta-\epsh\curl\curl\qquad.$$
Always assuming $s\notin\tilde{\pI}$\,, i.e. for all $n\in\nzn$
$$s\neq n+1/2\qqtext{and}s\neq -n-3/2\qquad,$$
we obtain

\begin{theo}\mylabel{cladecotheo}
The following decompositions hold:
\begin{itemize}
\item[\bf(i)] If $s<-3/2$\,, then
\begin{align*}
\Lzsom&=\clH{s}(\curlonn,\om)+\eps^\me\clH{s}(\pdivn,\om)=\clH{s}(\curln,\om)+\eps^\me\clH{s}(\pdivonn,\om)\\
&=\clH{s}(\curlonn,\om)+\eps^\me\clbH{s}(\pdivn,\om)=\clbHt{s}(\curln,\om)+\eps^\me\clbHt{s}(\pdivonn,\om)\quad.
\end{align*}
In the first line the intersections equal the finite dimensional space of Dirichlet resp. Neumann fields
$\dH{}{s}{\eps}{}(\om)$ resp. $\nH{}{s}{\eps}{}(\om)$
and in the second line the intersections equal the finite dimensional space of Dirichlet resp. Neumann fields
$\dH{}{s}{\eps}{}(\om)\cap\calBon{1}{}(\om)^{\bot_\eps}$ resp. $\nH{}{s}{\eps}{}(\om)\cap\calBon{2}{}(\om)^\bot$\,.
\item[\bf(ii)] If $-3/2<s\leq-1/2$\,, then
\begin{align*}
\Lzsom&=\clH{s}(\curlonn,\om)\dotplus\eps^\me\clbH{s}(\pdivn,\om)\\
&=\clbHt{s}(\curln,\om)\dotplus\eps^\me\clbHt{s}(\pdivonn,\om)\dotplus\nH{}{}{\eps}(\om)\qquad.
\end{align*}
\item[\bf(iii)] If $-1/2<s<3/2$\,, then
\begin{align*}
\clbLzseps(\om)&=\clbH{s}(\curlonn,\om)\dotplus\eps^\me\clbH{s}(\pdivn,\om)\qquad,\\
\clbLtzseps(\om)&=\clbHt{s}(\curln,\om)\dotplus\eps^\me\clbHt{s}(\pdivonn,\om)\qquad.
\end{align*}
For $s\geq0$ this decomposition is even $\skp{\eps\,\cdot\,}{\,\cdot\,}_{\lzom}$-orthogonal.
\item[\bf(iv)] If $s>3/2$\,, then
\begin{align*}
\clbLzseps(\om)&=\Big(\big([\Lzsom\boxplus\eta\towerV{-1}{s}]\cap\clbH{<\frac{3}{2}}(\curlonn,\om)\big)\\
&\qquad\qquad\oplus_\eps\eps^\me\big([\Lzsom\boxplus\eta\towerV{-1}{s}]\cap\clbH{<\frac{3}{2}}(\pdivn,\om)\big)\Big)\cap\Lzsom\qquad,\\
\clbLtzseps(\om)&=\Big(\big([\Lzsom\boxplus\eta\towerV{-1}{s}]\cap\clbHt{<\frac{3}{2}}(\curln,\om)\big)\\
&\qquad\qquad\oplus_\eps\eps^\me\big([\Lzsom\boxplus\eta\towerV{-1}{s}]\cap\clbHt{<\frac{3}{2}}(\pdivonn,\om)\big)\Big)\cap\Lzsom
\intertext{and}
\clbLzseps(\om)&=\clbH{s}(\curlonn,\om)\dotplus\eps^\me\clbH{s}(\pdivn,\om)\dotplus\square_\eps\eta\towerP{}{s-2}\qquad,\\
\clbLtzseps(\om)&=\clbHt{s}(\curln,\om)\dotplus\eps^\me\clbHt{s}(\pdivonn,\om)\dotplus\square_\eps\eta\towerP{}{s-2}\qquad,
\end{align*}
where the first two terms in the latter two decomposition are
$\skp{\eps\,\cdot\,}{\,\cdot\,}_{\lzom}$-orthogonal as well. Furthermore,
\begin{align*}
\Lzsom\cap\dH{}{-s}{\eps}{}(\om)^{\bot_\eps}&=\clbH{s}(\curlonn,\om)\oplus_\eps\eps^\me\clbH{s}(\pdivn,\om)\qquad,\\
\Lzsom\cap\nH{}{-s}{\eps}{}(\om)^{\bot_\eps}&=\clbHt{s}(\curln,\om)\oplus_\eps\eps^\me\clbHt{s}(\pdivonn,\om)\qquad.
\end{align*}
\end{itemize}
\end{theo}

\begin{rem}\mylabel{cladecotheorem}
Here Remark \ref{decorem} holds analogously.
In particular the matrix $\eps$ may be moved to the $\curl$-free terms in our decompositions as well and
for $s<-3/2$ and $\tau\geq1/2$ we have
\begin{align*}
\dH{}{s}{\eps}{}(\om)&=\dH{}{}{\eps}{}(\om)\dotplus\dH{}{s}{\eps}{}(\om)\cap\calBon{1}{}(\om)^{\bot_\eps}\qquad,\\
\nH{}{s}{\eps}{}(\om)&=\nH{}{}{\eps}{}(\om)\dotplus\nH{}{s}{\eps}{}(\om)\cap\calBon{2}{}(\om)^\bot\qquad.
\end{align*}
\end{rem}

\begin{theo}\mylabel{cladecogradcurltheosmall}
Let $s\in\rz$\,. Then
\begin{align*}
\text{\bf(i)}&&\clbHt{s}(\curln,\om)&=\grad\clH{s-1}(\grad,\om)
\intertext{and for $s>-1/2$}
\text{\bf(i$'$)}&&\clbH{s}(\curlonn,\om)&=\grad\clH{s-1}(\gradon,\om)\qquad.
\intertext{If $s<5/2$\,, then}
\text{\bf(ii)}&&\clbHt{s}(\pdivonn,\om)&=\curl\clbH{s-1}(\curlon,\pdivn\mu,\om)\\
&&&=\curl\clH{s-1}(\curlon,\pdivn\mu,\om)=\curl\clH{s-1}(\curlon,\om)\qquad,\\
&&\clbH{s}(\pdivn,\om)&=\curl\clbHt{s-1}(\curl,\pdivonn\mu,\om)\\
&&&=\curl\clH{s-1}(\curl,\pdivonn\mu,\om)=\curl\clH{s-1}(\curl,\om)\qquad.
\intertext{All these spaces are closed subspaces of $\Lzsom$\,. For $s<3/2$}
\text{\bf(iii)}&&\Lzsom&=\pdiv\clbHt{s-1}(\pdivon,\curln\mu,\om)\\
&&&=\pdiv\clH{s-1}(\pdivon,\curln\mu,\om)=\pdiv\clH{s-1}(\pdivon,\om)\qquad,\\
&&\Lzsom&=\pdiv\clbH{s-1}(\pdiv,\curlonn\mu,\om)\qqtext{, if $s>1/2$}\qquad,\\
&&\Lzsom&=\pdiv\clH{s-1}(\pdiv,\curlonn\mu,\om)=\pdiv\clH{s-1}(\pdiv,\om)\qquad.
\end{align*}
\end{theo}

\begin{theo}\mylabel{cladecogradcurltheolarge}
\begin{itemize}
\item[\bf(i)] For $s>5/2$
\begin{align*}
\clbHt{s}(\pdivonn,\om)&=\curl\Big(\big(\clH{s-1}(\curlon,\om)\boxplus\eta\towerV{-1}{s-1}\big)\cap\clbH{<\frac{3}{2}}(\pdivn\mu,\om)\Big)\\
&=\curl\big(\clH{s-1}(\curlon,\pdiv\mu,\om)\cap\calBon{1}{}(\om)^{\bot_\mu}\big)=\curl\clH{s-1}(\curlon,\om)\quad,\\
\clbH{s}(\pdivn,\om)&=\curl\Big(\big(\clH{s-1}(\curl,\om)\boxplus\eta\towerV{-1}{s-1}\big)\cap\clbHt{<\frac{3}{2}}(\pdivonn\mu,\om)\Big)\\
&=\curl\big(\clH{s-1}(\curl,\pdivon\mu,\om)\cap\calB(\om)^{\bot_\mu}\big)=\curl\clH{s-1}(\curl,\om)
\end{align*}
are closed subspaces of $\Lzsom$\,.
\item[\bf(ii)] For $s>3/2$
\begin{align*}
&\qquad\big(\Lzsom\boxplus\eta\towerV{-1}{s}\big)\cap\clbH{<\frac{3}{2}}(\curlonn,\om)\\
&=\grad\big(\clH{s-1}(\gradon,\om)\boxplus\eta\towerU{0}{s-1}\big)=\clbH{s}(\curlonn,\om)\dotplus\grad\eta\towerU{0}{s-1}\qquad,\\
&\\
&\qquad\big(\Lzsom\boxplus\eta\towerV{-1}{s}\big)\cap\clbHt{<\frac{3}{2}}(\curln,\om)\\
&=\grad\big(\clH{s-1}(\grad,\om)\boxplus\eta\towerU{0}{s-1}\big)=\clbHt{s}(\curln,\om)\dotplus\grad\eta\towerU{0}{s-1}\qquad,\\
&\\
&\qquad\big(\Lzsom\boxplus\eta\towerV{-1}{s}\big)\cap\clbHt{<\frac{3}{2}}(\pdivonn,\om)\\
&=\curl\Big(\big(\clH{s-1}(\curlon,\om)\boxplus\eta\towerV{-1}{s-1}\boxplus\eta\towerV{0}{s-1}\big)\cap\clbH{<\frac{1}{2}}(\pdivn\mu,\om)\Big)\\
&=\clbHt{s}(\pdivonn,\om)\dotplus\curl\mu^\me\eta\towerV{0}{s-1}\qquad,\\
&\\
&\qquad\big(\Lzsom\boxplus\eta\towerV{-1}{s}\big)\cap\clbH{<\frac{3}{2}}(\pdivn,\om)\\
&=\curl\Big(\big(\clH{s-1}(\curl,\om)\boxplus\eta\towerV{-1}{s-1}\boxplus\eta\towerV{0}{s-1}\big)\cap\clbHt{<\frac{1}{2}}(\pdivonn\mu,\om)\Big)\\
&=\clbH{s}(\pdivn,\om)\dotplus\curl\mu^\me\eta\towerV{0}{s-1}
\end{align*}
are closed subspaces of $\Lzsom\boxplus\eta\towerV{-1}{s}$\,. Moreover, $\pdiv\eta V^0_{-,n,m}=0$\,.
\end{itemize}
\end{theo}

\begin{theo}\mylabel{cladecogradcurltheoexcept}
Let $s>3/2$\,. Then
\begin{align*}
\text{\bf(i)}&&\Lzsom&=\pdiv\Big(\big(\clH{s-1}(\pdivon,\om)\boxplus\eta\towerV{-1}{s-1}\boxplus\eta\towerUh{-1}{}\big)\cap\clbHt{<\frac{1}{2}}(\curln\mu,\om)\Big)\\
&&&=\pdiv\Big(\big(\clH{s-1}(\pdivon,\curl\mu,\om)\cap\calBon{2}{}(\om)^{\bot_\mu}\big)\boxplus\eta\towerUh{-1}{}\Big)\\
&&&=\pdiv\big(\clH{s-1}(\pdivon,\curl\mu,\om)\cap\calBon{2}{}(\om)^{\bot_\mu}\big)\dotplus\Delta\eta\towerUh{0}{}\\
&&&=\pdiv\clH{s-1}(\pdivon,\om)\dotplus\Delta\eta\towerUh{0}{}\qquad,\\
\text{\bf(ii)}&&\Lzsom&=\pdiv\Big(\big(\clH{s-1}(\pdiv,\om)\boxplus\eta\towerV{-1}{s-1}\boxplus\eta\towerUh{-1}{}\big)\cap\clbH{<\frac{1}{2}}(\curlonn\mu,\om)\Big)\\
&&&=\pdiv\big(\clH{s-1}(\pdiv,\curlon\mu,\om)\boxplus\eta\towerUh{-1}{}\big)\\
&&&=\pdiv\clH{s-1}(\pdiv,\curlon\mu,\om)\dotplus\Delta\eta\towerUh{0}{}\\
&&&=\pdiv\clH{s-1}(\pdiv,\om)\dotplus\Delta\eta\towerUh{0}{}\qquad.
\end{align*}
\end{theo}

\begin{acknow}
The author is particularly indebted to his colleagues and friends Sebastian Bauer and Michael Trebing for many fruitful discussions.
\end{acknow}

\end{document}